\newcommand{\MG}[1]{{\color{black}{#1}}}
\newcommand{\mcB}{\mathcal{B}}
\newcommand{\mcD}{\mathcal{D}}
\newcommand{\mcG}{\mathcal{G}}
\newcommand{\mcDw}{\mathcal{D}_\omega}
\newcommand{\mcGw}{\mathcal{G}_\omega}
\newcommand{\mcLw}{\mathcal{L}_\omega}
\newcommand{\mcF}{\mathcal{F}}
\newcommand{\mcL}{\mathcal{L}}
\newcommand{\mbR}{\mathbb{R}}
\newcommand{\mbRn}{{\mathbb{R}^n}}
\newcommand{\omg}{{\Omega}}
\newcommand{\omgomgi}{{\Omega\cup\Omega_I}}
\def \alphab{{\boldsymbol\alpha}}
\def \vb{\mathbf{v}}
\def \xb{\boldsymbol{x}}
\def \yb{\boldsymbol{y}}
\def \zb{\bm{z}}
\def \intl{\int\limits}
\def \gameq{\gamma_{\textup{eq}}}
\newcommand{\vertiii}[1]{{\left\vert\kern-0.25ex\left\vert\kern-0.25ex\left\vert #1 
    \right\vert\kern-0.25ex\right\vert\kern-0.25ex\right\vert}}
\begin{document}


\markboth{M. D'Elia and M. Gulian}{Analysis of anisotropic fractional diffusion}
\title{Analysis of Anisotropic Nonlocal Diffusion Models: Well-posedness of Fractional Problems for Anomalous Transport}

\author[Marta D'Elia and Mamikon Gulian]{Marta D'Elia\corrauth \affil{1}$^{,*}$ and Mamikon Gulian\affil{2}}
\address{\affilnum{1}\ Computational Science and Analysis, Sandia National Laboratories, CA\\
\affilnum{2}\ Center for Computing Research, Sandia National Laboratories, NM}
%
%
\emails{{\tt mdelia@sandia.gov} (M. D'Elia), {\tt mgulian@sandia.gov} (M. Gulian)}
%

\keywords{Nonlocal models, fractional models, anomalous diffusion, anisotropic diffusion, solute transport.}

\ams{4B10, 35R11, 35B40, 26B12}

\begin{abstract}
We analyze the well-posedness of an anisotropic, nonlocal diffusion equation. 
Establishing an equivalence between weighted and unweighted anisotropic nonlocal diffusion operators in the vein of unified nonlocal vector calculus, we apply our analysis to a class of fractional-order operators and present rigorous estimates for the solution of the corresponding anisotropic anomalous diffusion equation. Furthermore, we extend our analysis to the anisotropic diffusion-advection equation and prove well-posedness for fractional orders $s\in[0.5,1)$. We also present an application of the advection-diffusion equation to anomalous transport of solutes. 
\end{abstract}

\maketitle

\section{Introduction}\label{sec:intro}
Nonlocal models have become a preferred modeling choice for scientific and engineering applications featuring global behavior that is affected by small scales. In particular, nonlocal models can capture effects that classical partial differential equations (PDEs) fail to describe; these effects include multiscale behavior and anomalous transport such as superdiffusion and subdiffusion. A nonlocal equation is characterized by integral operators acting on a lengthscale or ``horizon'', describing long-range forces and reducing the regularity requirements on the solutions. Engineering applications include surface or subsurface transport 
\cite{Benson2000,Benson2001,Deng2004,katiyar2019general,katiyar2014peridynamic,Schumer2003,Schumer2001},
fracture mechanics
\cite{Ha2011,Littlewood2010,Silling2000},
turbulence
\cite{DiLeoni-2020,Pang2020},
image processing
\cite{Buades2010,DElia2019imaging,Gilboa2007,Lou2010}
and stochastic processes
\cite{Burch2014,DElia2017,Meerschaert2012,Metzler2000,Metzler2004}.

In this work we focus mainly on nonlocal operators of fractional type, of the form
\begin{equation}\label{eq:baby_operator}
\mcL_{\omega;A}=\mcDw\left(A(\xb)\mcGw\right), \text{ with }\omega \propto |\xb-\yb|^{-n-s}
\end{equation}
and their use in models for solute transport. As reviewed carefully in Section \ref{sec:notation}, $A(\xb)$ denotes a diffusion tensor and $\mcDw$ and $\mcGw$ denote weighted nonlocal divergence and gradient operators, respectively, with weight function $\omega$. Modeling and simulation of surface and subsurface transport is challenging due to the inevitable heterogeneities of the media which generate, at the continuum scale, diffusion processes that exhibit transport rates which may be faster or slower than those described by the classical integer-order diffusion equation. While at the smaller scales a local PDE model may be able to accurately describe diffusion processes by explicitly embedding the heterogeneities in the model parameters, at the continuum scale, such models may fail to do so. In contrast, a fractional-order model using an diffusion operator of the form \eqref{eq:baby_operator} may act as a homogenized model that encodes the heterogeneities of the medium in the integral operator itself. Several fractional models have been proposed in the literature for such applications; we refer to \cite{Sun2020} for an extensive review. 
The problems we study in the present work serve as a novel models for anisotropic, anomalous transport. 

Though significant progress has been made in the analysis and simulation of nonlocal equations, many modeling and computational challenges still remain. Among those, we mention the expensive computational cost \cite{AinsworthGlusa2018,Capodaglio2020DD,DElia-ACTA-2020,DEliaFEM2020,Pasetto2019,Silling2005meshfree,Wang2010}, the identification of optimal model parameters or kernel functions \cite{burkovska2020,DElia2014DistControl,DElia2016ParamControl,Gulian2019,Pang2019fPINNs,Pang2017discovery,Xu2020learning,You2020Regression,You2020aaai}, and gaps and open questions within the nonlocal and fractional calculus theories \cite{DElia2020Helmholtz,Shieh2015,Shieh2017,silhavy2020fractional}. One focus of this work is on such theoretical gaps in the analysis of the so-called weighted nonlocal operators, their connection to fractional operators and on the well-posedness of the corresponding diffusion problem. The connection between nonlocal and fractional operators, investigated for the first time in \cite{Defterli2015} and \cite{DElia2013}, was studied extensively in the recent work \cite{DElia2020Unified}. There, the authors introduce the notion of a {\it unified nonlocal vector calculus} for scalar functions and introduce a universal nonlocal Laplace operator that includes, as a special cases, the well-known fractional Laplace operators and other variants of the latter. Variational results of the unified calculus allow the extension of the well-established theory of unweighted operators \cite{Du2012} to weighted nonlocal operators and, more specifically, operators of fractional-order vector calculus. 

We continue this effort by providing conditions for well-posedness for problems involving anisotropic nonlocal weighted operators, with a special focus on fractional operators of the form \eqref{eq:baby_operator} for which we establish well-posedness. 

Our major contributions are:
\vspace{-.3cm}
\begin{enumerate}
    \item The extension of results presented in \cite{DElia2020Unified} to the parabolic case.
    \vspace{-.3cm}
    \item The well-posedness analysis of the elliptic and parabolic anisotropic nonlocal diffusion problem and, for the fractional case, the well-posedness of such problems for fractional orders $s\in(0,1)$.
    \vspace{-.3cm}
    \item The well-posedness analysis of an anisotropic nonlocal transport (advection-diffusion) equations and, for the fractional case, the well-posedness of such problem for fractional orders $s\in[0.5,1)$. \label{item:advection}
    \vspace{-.3cm}
    \item The application of (\ref{item:advection}) to surface and subsurface solute transport. 
\end{enumerate}

\paragraph{Outline of the article} In Section \ref{sec:notation} we recall important results of the unified nonlocal calculus and we extend the well-posedness analysis presented in \cite{DElia2020Unified} to the time-dependent case. In Section \ref{sec:anisotropic-Poisson} we introduce the anisotropic Poisson equation involving the operator \eqref{eq:baby_operator}, introduce an equivalent unweighted diffusion operator by proving the existence of a symmetric {\it equivalence kernel}, and extend variational results of \cite{DElia2020Unified} to the anisotropic case. In the same section we also rigorously prove the well-posedness of the corresponding fractional-order problem. In Section \ref{sec:anisotropic-parabolic} we introduce the anisotropic parabolic problem for \eqref{eq:baby_operator} and analyze its well-posedness; we also provide an \emph{a priori} estimate for the its solution and specialize it for the fractional case. Section \ref{sec:solute} deals with anomalous transport of solutes in the surface or subsurface. Here, we prove the well-posedness of the anisotropic, anomalous advection-diffusion problem for both the general and fractional case. In Section \ref{sec:conclusion} we summarize our results and contributions. 

\section{Notation and previous results}\label{sec:notation}

In this section we recall the definition of weighted and unweighted nonlocal operators with special emphasis on the case of fractional kernels. This review is based on Section 2 of \cite{DElia2020Unified}; as we recall those results, we also extend them to the time-dependent, parabolic case.

\subsection{Unweighted operators and corresponding volume-constrained problems}
Let $\alphab:\mathbb{R}^n\times\mathbb{R}^n\to\mathbb{R}^n$, for $n=1,2,3$, be an anti-symmetric two-point vector function. For $\vb:\mathbb{R}^n\times\mathbb{R}^n\to\mathbb{R}^n$, the nonlocal {\it unweighted divergence} $\mathcal{D}\vb:\mathbb{R}^n\to\mathbb{R}$ is defined as
\begin{equation}\label{eq:unw-div}
\begin{aligned}
\mathcal{D}\vb(\xb) := \int_{\mathbb{R}^n} (\vb(\xb,\yb)+\vb(\yb,\xb))\cdot\alphab(\xb,\yb)d\yb. 
\end{aligned}
\end{equation}
For $u:\mathbb{R}^n\to\mathbb{R}$ the nonlocal {\it unweighted gradient}, $\mathcal G u:\mathbb{R}^n\times\mathbb{R}^n\to\mathbb{R}^n$, the negative adjoint of \eqref{eq:unw-div} \cite{Du2013}, is defined as
\begin{equation}\label{eq:unw-grad}
\mathcal{G}u(\xb,\yb) = (u(\yb)-u(\xb))\alphab(\xb,\yb).
\end{equation}
The nonlocal {\it unweighted Laplacian} is defined as the composition of unweighted nonlocal divergence and gradient, i.e.
\begin{equation}\label{eq:unw-lapl}
\mcL u(\xb) = \mcD\mcG u(\xb) =
2\int_\mbRn (u(\yb)-u(\xb))\gamma(\xb,\yb)d\yb,
\end{equation}
where the nonnegative kernel\footnote{For a discussion on sign-changing kernels and nonsymmetric kernels, see  \cite{mengesha2013analysis} and \cite{DElia2017}, respectively.} $\gamma$ is given by $\gamma=\alphab\!\cdot\!\alphab$.

In order to define a diffusion problem in a bounded domain $\Omega\subset\mathbb R^n$, by definition of $\mcL u(\xb)$, it is necessary to evaluate $u(\xb)$ for $\xb \in \mbRn\setminus\Omega$. We refer to conditions on $u$ in the exterior of the domain as exterior conditions or volume constraints. With this in mind, the strong form of an unweighted nonlocal diffusion problem is given by: for $f:\Omega\to\mbR$, $u_0:\Omega\to\mbR$ and $g:\mbRn\setminus\Omega\to\mbR$, find $u$ such that
\begin{equation}\label{eq:bound-truncated-prob}
\left\{\begin{aligned}
\partial_t u(\xb,t) &= \mcL u (\xb,t) + f (\xb,t),  
&\quad (\xb,t)\in\Omega\times(0,T)\\
u(\xb,t) &= g(\xb,t),         
&\quad (\xb,t)\in\mbRn\setminus\Omega\\
u(\xb,0) &= u_0(\xb),  
&\quad \xb\in\Omega
\end{aligned}\right.
\end{equation}
where the second condition in \eqref{eq:bound-truncated-prob} is the nonlocal counterpart of a Dirichlet boundary condition for PDEs and it is referred to as {\it Dirichlet volume constraint}\footnote{For definition and analysis of Neumann volume constraints we refer to \cite{Du2012} and for \MG{their} numerical treatment we refer to, e.g., \cite{DEliaNeumann2019}.} or an \emph{exterior value condition}. The work \cite{DElia2017} shows that such condition is required to guarantee the well-posedness of \eqref{eq:bound-truncated-prob}. For simplicity and without loss of generality we analyze the homogeneous case $g=0$; all the results below can be extended to the non-homogeneous case using ``lifting'' arguments (see, e.g., \cite{DElia2014DistControl}).

To obtain the variational form of equation \eqref{eq:bound-truncated-prob}, we apply the following nonlocal form of the first Green's identity, introduced in \cite{Du2013}\footnote{Note that \cite{Du2013} introduces the first Green's identity for operators whose kernels have support $B_\delta(\xb)$, for $\delta>0$. Equation \eqref{eq:unweighted-Green} corresponds to the case $\delta=\infty$, of interest in this work. The same result, solely for fractional operators, was also proved in \cite{Dipierro2017}.}:
\begin{equation} \label{eq:unweighted-Green}
  \int\limits_\Omega -\mcL u(\xb) \, v(\xb) \,d\xb  
= 
\MG{\int_{\mathbb{R}^n} \int_{\mathbb{R}^n}}
\mcG u(\xb,\yb) \cdot \mcG v(\xb,\yb) \,d\yb\,d\xb 
+ \int\limits_{\mbRn\setminus\Omega} \mcD(\mcG u)(\xb)\,v(\yb) \,d\xb.
\end{equation}
Multiplying \eqref{eq:bound-truncated-prob} by a test function $v$ such that $v = 0$ on $\mbRn\setminus\Omega$ and integrating over the domain $\Omega$ yields\MG{, for all $t \ge 0$,}
\begin{align}\label{eq:bound-truncated-weak}
0 &=  \int\limits_\Omega (\partial_t u\MG{(\xb,t)}-\mcL u\MG{(\xb,t)} -f\MG{(\xb,t)})\, v\MG{(\xb)} \,d\xb
\\
&\begin{multlined}[t]
= \int\limits_\Omega \partial_t u\MG{(\xb,t)}\,v\MG{(\xb)} \,d\xb +
\MG{\int_{\mathbb{R}^n} \int_{\mathbb{R}^n}}
\mcG u\MG{(\xb,\yb,t)} \cdot \mcG v\MG{(\xb,\yb)} \,d\yb\,d\xb
\\
+ \int\limits_{\mbRn\setminus\Omega} \mcD(\mcG u)\MG{(\xb,t)}\,v\MG{(\xb)} \,d\xb
- \int\limits_\Omega f\MG{(\xb,t)}\, v\MG{(\xb)} \,d\xb,
\end{multlined}
\end{align}
where the integral over $\mbRn\setminus\Omega$ on the right-hand side is zero due to the properties of $v$. 
Given a function space $S$ with norm $\|\cdot\|$, we define the space $L^2(0,T;S)$ as follows
\begin{equation*}
L^2(0,T;S) = \{
\MG{w:\mathbb{R}^n \times \mathbb{R} \rightarrow \mathbb{R} \text{ such that } w(\cdot,t) \in S \text{ $\forall$ $t \ge 0$, and }}
\|\MG{w}(\cdot,t)\|_S\in L^2(0,T)\}.
\end{equation*}
Then, the weak form of the nonlocal diffusion problem reads as follows. For $f\in L^2(0,T;V_\Omega'(\mbRn))$, find $u\in L^2(0,T;V_\Omega(\mbRn)\MG{)}$ such that
\begin{equation}\label{eq:bound-truncated-weak-forms}
(\partial_t u,v)+ \mcB(u,v) = \mcF(v), \;\;\forall\, v\in V_\Omega(\mbRn),
\end{equation}
where $(\cdot,\cdot)$ indicates the $L^2$ inner product over $\Omega$, and
\begin{equation}\label{eq:A-F-V}
\begin{aligned}
\mcB(u,v) & = 
\MG{\int_{\mathbb{R}^n}\int_{\mathbb{R}^n}}
\mcG u\MG{(\xb,\yb)} \cdot \mcG v\MG{(\xb,\yb)} \,d\yb\,d\xb,\\[2mm]
\mcF(v) & =\int_\Omega f\MG{(\xb)}\, 
v\MG{(\xb)} \,d\xb,\\[2mm]
V_\Omega(\mbRn) & =\{v\in L^2(\mbRn): \vertiii{v}<\infty\;
{\rm and} \; v|_{\mbRn\setminus\Omega} = 0\}.
\end{aligned}
\end{equation}
Here, the {\it energy norm} $\vertiii{\cdot}$ is defined as
\begin{equation}\label{eq:unweighted-energy}
\vertiii{v}^2 = 
\MG{\int_{\mathbb{R}^n} \int_{\mathbb{R}^n}}
|\mcG v\MG{(\xb,\yb)}|^2\,d\yb\,d\xb,
\end{equation}
and the space $V_\Omega'$ is the dual space of $V_\Omega$.
Note that the bilinear form $\mcB(\cdot,\cdot)$ defines an inner product on $V_\Omega(\mbRn)$ and that $\vertiii{u}^2= \mcB(u,u)$. This fact implies that the bilinear form is coercive and, hence, weakly coercive. Together with the continuity of $\mcB$ and $\mcF$, this yields the well-posedness of the weak form \eqref{eq:bound-truncated-weak-forms}\cite{DElia2017}.

\subsection{Weighted operators and corresponding volume-constrained problems}
\label{sec:intro_weighted_operators}
We let $\omega:\mathbb{R}^n\times\mathbb{R}^n\to\mathbb{R}$ be a nonnegative, symmetric scalar function known as the \emph{weight} function. For $\vb:\mathbb{R}^n\to\mathbb{R}^n$, the nonlocal {\it $\omega$-weighted divergence} $\mcD_\omega\vb:\mathbb{R}^n\to\mathbb{R}$ is defined as
\begin{align}
\begin{split}
\label{eq:w-div}
\mathcal{D}_{\omega} \vb(\xb) &:= \mcD(\omega(\xb,\yb)\vb(\xb))\\
&= 
\int_{\mathbb{R}^n} (\omega(\xb,\yb)\vb(\xb) + \omega(\yb,\xb)\vb(\yb))\cdot\alphab(\xb,\yb)d\yb.
\end{split}
\end{align}
For $u:\mathbb{R}^n\to\mathbb{R}$, the nonlocal {\it $\omega$-weighted gradient}, negative adjoint of the divergence \cite{Du2013}, $\mcG_\omega u:\mbRn\to\mbRn$ is defined as
\begin{align}\label{eq:w-grad}
\begin{split}
\mcG_\omega u(\xb) &:= \int_\mbRn
\MG{\omega(\xb,\yb)}
\mcG u(\xb,\yb) d\yb  \\
&= 
\int_\mbRn
\MG{\omega(\xb,\yb)}
(u(\yb)-u(\xb))\alphab(\xb,\yb)
d\yb .
\end{split}
\end{align}
As in the unweighted case, we define the nonlocal {\it $\omega$-weighted Laplacian} as the composition of \eqref{eq:w-div} and \eqref{eq:w-grad}, i.e., 
\begin{equation}\label{eq:w-lapl}
\begin{aligned}
\mcLw u(\xb) &= \mcD_\omega\mcG_\omega u(\xb)\\
& 
\begin{multlined}[t]
=\int_\mbRn  \left[
\MG{\omega(\xb,\yb)}
\int_\mbRn
\MG{\omega(\xb,\zb)}
(u(\zb)-u(\xb))\alphab(\xb,\zb)
d\zb  \right.\\ \qquad +
\MG{\omega(\yb,\xb)}
\left.\int_\mbRn
\MG{\omega(\yb,\zb)}
(u(\zb)-u(\yb))\alphab(\yb,\zb)
d\zb 
\right] \cdot\alphab(\xb,\yb)d\yb.
\end{multlined}
\end{aligned}
\end{equation}
\MG{Using the symmetry of $\omega$, we can further write
\begin{multline}\label{eq:w-lapl-with-symmetry}
\mcLw u(\xb)
=\int_\mbRn
\omega(\xb,\yb)
 \left[
\int_\mbRn
\omega(\xb,\zb)
(u(\zb)-u(\xb))\alphab(\xb,\zb)
d\zb  \right.\\ +
\left.\int_\mbRn
\omega(\yb,\zb)
(u(\zb)-u(\yb))\alphab(\yb,\zb)
d\zb 
\right] \cdot\alphab(\xb,\yb)d\yb.
\end{multline}}

\MG{\begin{remark}
We have reviewed the definitions \eqref{eq:w-div}, \eqref{eq:w-grad}, and \eqref{eq:w-lapl} assuming that $\omega$ is a symmetric, scalar valued function. In Section \ref{sec:anisotropic-Poisson}, we consider the case when $\omega$ is a nonsymmetric tensor. Definitions \eqref{eq:w-div}, \eqref{eq:w-grad}, and \eqref{eq:w-lapl}, but not the simplification \eqref{eq:w-lapl-with-symmetry}, may be utilized for this case with products of $\omega$ and vectors being interpreted as matrix-vector multiplication. 
\end{remark}}

As for the unweighted case, problems defined on bounded domains involving these operators require a volume constraint on the exterior of $\Omega$.
We introduce the strong form of a weighted, nonlocal diffusion problem with homogeneous volume constraints. For $f:\Omega\to\mbR$ and $u_0:\Omega\to\mbR$, find $u$ such that
\begin{equation}\label{eq:weight-bound-truncated-prob}
\left\{\begin{aligned}
\partial_t u(\xb,t) &= \mcLw u(\xb,t) + f(\xb,t),  &\quad (\xb,t)\in\Omega\times(0,T]\\
u(\xb,t) &= 0,       &\quad (\xb,t)\in\mbRn\setminus\Omega\times(0,T]\\
u(\xb,0) &= u_0(\xb),  &\quad \xb\in\Omega
\end{aligned}\right.
\end{equation}
where the second condition in \eqref{eq:weight-bound-truncated-prob} is still referred to as Dirichlet volume constraint. Next, by multiplying \eqref{eq:weight-bound-truncated-prob} by a test function \MG{$v:\mathbb{R}^n \rightarrow \mathbb{R}$ such that}
\begin{equation}\label{eq:exterior_cond_v}
v=0 \text{ in } \mbRn\setminus\Omega,
\end{equation}
and integrating over the domain $\Omega$, we have the following weak form:
\begin{equation}\label{eq:weight-truncated-weak}
\int_\Omega (\partial_t u\MG{(\xb,t)}-\mcLw u\MG{(\xb,t)} -f\MG{(\xb,t)})\, v\MG{(\xb)} \,d\xb = 0,
\MG{\text{ for all $t > 0$.}}
\end{equation}
The work \cite[Theorem~5.1]{DElia2020Unified} introduced the following weighted nonlocal Green's first identity
\begin{equation}\label{eq:weighted-green}
 \int_{\Omega}-\mcLw u(\xb) v(\xb) \ d\xb
= \int_\mbRn \mcGw u(\xb) \!\cdot\! \mcGw u(\xb) d\xb 
+ \int_{\mbRn \setminus \Omega} \mcDw\mcGw u(\xb) v(\xb) \ d\xb.
\end{equation}
By substituting the latter in \eqref{eq:weight-truncated-weak}, we obtain
\begin{multline}
\int_\Omega \partial_t u\MG{(\xb,t)}\,v\MG{(\xb)} \,d\xb +
\int_\mbRn \mcGw u\MG{(\xb,t)} \!\cdot\! \mcGw u\MG{(\xb,t)} d\xb \\
+ \int_{\mbRn \setminus \Omega} \mcDw\mcGw u\MG{(\xb,t)} v(\xb) \ d\xb
- \int_\Omega f\MG{(\xb,t)}\, v\MG{(\xb)} \,d\xb=0.
\end{multline}
By \eqref{eq:exterior_cond_v}, the integral over $\mbRn\setminus\Omega$ on the left-hand side is zero.
Thus, the weak form of the nonlocal diffusion problem reads as follows. For $f\in L^2(0,T;(V^\omega_\Omega)'(\mbRn))$, and $u_0\in V^\omega_\Omega(\mbRn)$, find $u\in L^2(0,T;V^\omega_\Omega(\mbRn)$ such that
\begin{equation}\label{eq:weighted-weak}
(\partial_t u,v)+ \mcB_\omega(u,v) = \mcF(v), \;\;\forall\, v\in V^\omega_\Omega(\mbRn),
\end{equation}
where 
\begin{equation}\label{eq:A-F-V-w}
\begin{aligned}
\mcB_\omega(u,v) & = \int_\mbRn \mcGw u(\xb) \!\cdot\! \mcGw \MG{v}(\xb) d\xb,\\[2mm]
V^\omega_\Omega(\mbRn) & =\{v\in L^2(\mbRn): \vertiii{v}_\omega<\infty\;
                {\rm and} \; v|_{\mbRn\setminus\Omega} = 0\},
\end{aligned}
\end{equation}
and where the {\it weighted energy} is defined as
\begin{equation}\label{eq:weighted-energy}
\vertiii{v}_{\omega}^2 = \int_\mbRn 
\MG{|\mcGw v(\xb)|^2}\,d\xb.
\end{equation}
The well-posedness of problem \eqref{eq:weighted-weak} follows when an equivalence relationship can be established between weighted and unweighted operators, as we summarize in the next section. 

\subsection{The equivalence kernel}
The equivalence theorem between weighted and unweighted operators proved in \cite{DElia2020Unified} provides an equivalence kernel $\gameq$ that, for given $\alphab$ and $\omega$, guarantees that $\mcL=\mcL_\omega$. In what follows, we summarize the main result and its consequences. 

\begin{theorem}\label{thm:equivalence}\cite[Theorem~4.1]{DElia2020Unified}
Let $\mcDw$ and $\mcGw$ be the operators associated with the symmetric \MG{scalar} weight function $\omega$ and the anti-symmetric function $\alphab$. For the equivalence kernel $\gameq$ defined by
\begin{equation}\label{eq:kerndef}
\begin{aligned}
2\gameq(\xb,\yb;\omega,\alphab)
&= \int_\mbRn [\MG{\omega(\xb,\yb)}\alphab(\xb,\yb)\cdot 
   \MG{\omega(\xb,\zb)}\alphab(\xb,\zb)\\
&  \hspace{1cm}+ \MG{\omega(\zb,\yb)}\alphab(\zb,\yb)\cdot
   \MG{\omega(\xb,\yb)}\alphab(\xb,\yb)\\[2mm]
&  \hspace{1cm}+\MG{\omega(\zb,\yb)}\alphab(\zb,\yb)
   \cdot\MG{\omega(\xb,\zb)}\alphab(\xb,\zb)]d\zb,
\end{aligned}
\end{equation}
the weighted operator $\mcLw = \mcDw\mcGw$ and the unweighted Laplacian operator $\mcL$ with kernel $\gamma_{\textup{eq}}$ are equivalent, i.e.
$\mcL = \mcLw.$
\end{theorem}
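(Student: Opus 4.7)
The plan is to prove the identity directly by expanding both $\mcLw u(\xb)$ and the unweighted Laplacian $\mcL u(\xb)$ built from the kernel $\gameq$, and then matching the resulting integrands term by term via dummy-variable relabeling combined with the anti-symmetry of $\alphab$ and the symmetry of $\omega$.

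I would start from the symmetrized form \eqref{eq:w-lapl-with-symmetry} and split $\mcLw u(\xb) = A(\xb) + B(\xb)$, where $A$ collects the part containing $u(\zb) - u(\xb)$ with the weight pair $\omega(\xb,\yb)\omega(\xb,\zb)$ and $B$ collects the part containing $u(\zb) - u(\yb)$ with the weight pair $\omega(\xb,\yb)\omega(\yb,\zb)$. Independently, applying the definition \eqref{eq:unw-lapl} to the kernel $\gameq$ from \eqref{eq:kerndef} and pulling the $\zb$-integration outside by Fubini, I would write the resulting unweighted Laplacian as $I_1 + I_2 + I_3$, one summand per line in \eqref{eq:kerndef}, each of the form $\int_\mbRn (u(\yb)-u(\xb))(\cdot)\,d\yb$.

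The matching of $I_1$ with $A$ is immediate after the dummy-variable swap $\yb \leftrightarrow \zb$ in $A$, since no sign change from $\alphab$ is incurred. The real work is to show $I_2 + I_3 = B$. For this I would decompose $B$ by linearity as $B_a - B_b$ where the summands carry the factors $u(\zb)$ and $u(\yb)$, respectively, and then expand the $(u(\yb) - u(\xb))$ factor in each of $I_2$ and $I_3$ into its two pieces. Using $\omega(\zb,\yb) = \omega(\yb,\zb)$ together with $\alphab(\zb,\yb) = -\alphab(\yb,\zb)$, the $u(\yb)$ piece of $I_2$ equals $-B_b$; after the further swap $\yb \leftrightarrow \zb$, the $u(\zb)$ piece of $I_3$ matches $B_a$. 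The two pieces proportional to $u(\xb)$ coming from $I_2$ and $I_3$ carry opposite signs, precisely because one accrued a sign flip from $\alphab$ while the other did not, and so they cancel.

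The step I expect to be most delicate is this final cancellation of the stray $u(\xb)$ contributions, since it requires careful bookkeeping of how the anti-symmetry of $\alphab$ interacts with the variable swaps; all other manipulations reduce to renaming dummy variables and invoking Fubini, which is justified by mild integrability hypotheses on $\omega$ and $\alphab$. Once $I_1 = A$, the $u$-dependent parts of $I_2$ and $I_3$ reproduce $B_a - B_b = B$, and the $u(\xb)$ parts cancel, summing the three identities gives $\mcL u = \mcLw u$, completing the proof.
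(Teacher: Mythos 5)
Your proposal is correct, and the bookkeeping of the $u(\xb)$ cancellation checks out: after the swap $\yb\leftrightarrow\zb$ the third-line contribution picks up exactly one sign flip from $\alphab(\zb,\yb)=-\alphab(\yb,\zb)$ and annihilates the second-line contribution. The paper itself cites this theorem from \cite{DElia2020Unified} without reproving it, but its proof of the generalization to nonsymmetric weights (Lemma \ref{anisotropic-equivalence}) uses exactly your manipulations --- splitting $u(\zb)-u(\yb)=(u(\zb)-u(\xb))+(u(\xb)-u(\yb))$, relabeling dummy variables, and invoking the anti-symmetry of $\alphab$ --- only organized in the ``derive the kernel'' direction rather than your ``verify the given kernel'' direction, so the two arguments are essentially the same.
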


This result, and the weighted nonlocal Green's first identity, imply the following important variational equivalence.
\begin{theorem}\label{thm:var-equivalence}\cite[Theorem~5.2]{DElia2020Unified}
For $\gameq(\xb,\yb;\omega,\alphab)$ defined as in \eqref{eq:kerndef}, the variational forms associated with weighted and unweighted nonlocal operators are equivalent. That is, for all $v=0$ in $\mbRn\setminus\Omega$,
\begin{equation}\label{eq:weak-equivalence}
 \mcB(u,v)
=\int_{\mbRn}\int_{\mbRn}\mcG u\MG{(\xb,\yb)} \MG{\cdot} \mcG v\MG{(\xb,\yb)} \,d\yb\,d\xb 
=\int_{\mbRn} \mcGw u\MG{(\xb)} \MG{\cdot} \mcGw v\MG{(\xb)} \,d\xb
=\mcB_\omega(u,v).
\end{equation}
\end{theorem}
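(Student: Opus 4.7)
The plan is to combine the pointwise operator equivalence from Theorem \ref{thm:equivalence} with the two nonlocal Green's first identities already in hand: the unweighted identity \eqref{eq:unweighted-Green} for $\mcL$ and the weighted identity \eqref{eq:weighted-green} for $\mcLw$. Both identities contain an exterior ``boundary'' integral over $\mbRn\setminus\Omega$ that is multiplied by the test function; the hypothesis $v|_{\mbRn\setminus\Omega}=0$ is precisely what is needed to kill those terms.

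Concretely, I would first take the unweighted identity \eqref{eq:unweighted-Green} with $\mcL$ understood as the Laplacian with kernel $\gameq$, apply it to $u$ and $v$, and use $v=0$ on $\mbRn\setminus\Omega$ to conclude
\[
-\int_\Omega \mcL u(\xb)\, v(\xb)\,d\xb
= \int_{\mbRn}\int_{\mbRn}\mcG u(\xb,\yb)\cdot \mcG v(\xb,\yb)\,d\yb\,d\xb
= \mcB(u,v).
\]
Next, apply the weighted identity \eqref{eq:weighted-green} to the same $u$ and $v$, and again use the exterior condition on $v$ to discard the boundary integral, obtaining
\[
-\int_\Omega \mcLw u(\xb)\, v(\xb)\,d\xb
= \int_{\mbRn} \mcGw u(\xb)\cdot \mcGw v(\xb)\,d\xb
= \mcB_\omega(u,v).
\]
Finally, Theorem \ref{thm:equivalence} gives the pointwise identity $\mcL u(\xb) = \mcLw u(\xb)$ for the equivalence kernel $\gameq$ of \eqref{eq:kerndef}, so the two left-hand sides above agree, yielding $\mcB(u,v)=\mcB_\omega(u,v)$.

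There is no serious obstacle: the nontrivial content (pointwise equivalence of the two Laplacians and the two Green's identities) has already been established in \cite{DElia2020Unified}, and the present statement is essentially the variational shadow of Theorem \ref{thm:equivalence}. The only point worth verifying carefully is that $u$ lies in a space where both Green's identities are valid, so that each of the bilinear forms, the pointwise operator values, and the discarded exterior integrals are well-defined; this is handled by working with $u,v$ in the intersection $V_\Omega(\mbRn)\cap V^\omega_\Omega(\mbRn)$, which is the natural setting in view of the definitions \eqref{eq:A-F-V} and \eqref{eq:A-F-V-w} and the finiteness of the corresponding energies \eqref{eq:unweighted-energy}, \eqref{eq:weighted-energy}.
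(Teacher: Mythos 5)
Your proposal is correct and follows the same route the paper indicates: it derives the identity by combining the pointwise equivalence $\mcL = \mcLw$ from Theorem \ref{thm:equivalence} with the unweighted and weighted Green's first identities \eqref{eq:unweighted-Green} and \eqref{eq:weighted-green}, using $v|_{\mbRn\setminus\Omega}=0$ to eliminate the exterior integrals. This is precisely the argument the paper sketches in the sentence preceding the theorem (``This result, and the weighted nonlocal Green's first identity, imply \dots''), with the full details deferred to \cite{DElia2020Unified}.
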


An immediate consequence of this theorem is the equivalence of weighted and unweighted energies, i.e.
\begin{equation}\label{eq:equivalence_of_norms}
\vertiii{v}^2 = \mcB(v,v) = \mcB_\omega(v,v)= \vertiii{v}^2_\omega.
\end{equation}

More importantly, the variational equivalence allows us to extend the unweighted well-posedness results to the weighted case, anytime the equivalence kernel $\gameq$ induces an unweighted coercive bilinear form $A(\cdot,\cdot)$, see \cite{DElia2020Unified} for a discussion.

\subsection{The special case of fractional operators}
In this work, we are interested in the case of fractional operators. In this section we recall their definition and specify the choices of $\alphab$ and $\omega$ for which the weighted fractional Laplacian is equivalent to the standard fractional Laplacian. 

The (Riesz) fractional Laplacian is defined as \cite{Lischke2020}
\begin{equation}
\label{eq:riesz_Laplacian_rn}
(-\Delta)^s u = C_{n,s}
\int_{\mathbb{R}^n} \frac{u(\xb) - u(\yb)}{|\xb-\yb|^{n+2s}}d\yb,
\end{equation}
where
\begin{equation}\label{eq:frac_laplc_const}
C_{n,s}=
\frac{4^{s} \Gamma\left(s+\frac{n}{2}\right)}
{\pi^{n/2}|\Gamma(-s)|}.
\end{equation}
The weighted fractional gradient and divergence operators are defined as \cite{Mazowiecka2018,Ponce2016,Shieh2015,Shieh2017,silhavy2020fractional}
\begin{align}\label{eq:frac-Cart}
\begin{split}
{\rm grad}^s  u(\xb) &= 
\int_{\mbRn}\left[ u(\xb) - u(\yb) \right]
\frac{\xb-\yb}{|\xb-\yb|}
\frac{1}{|\xb-\yb|^{n+s}}
d\yb,
\\
{\rm div}^s \vb(\xb) &= 
\int_{\mbRn}
\left[ \vb(\xb) - \vb(\yb) \right]
\cdot
\frac{\xb-\yb}{|\xb-\yb|}
\frac{1}{|\xb-\yb|^{n+s}}
d\yb.
\end{split}
\end{align}
We summarize in the following theorem several results proved in \cite{DElia2020Unified}.

\begin{theorem}\label{thm:fractional_special_case_nonlocal}
Let $\vb \in {\bf H}^s(\mathbb{R}^d)$ and $u \in H^s(\mathbb{R}^d)$.
For the weight function and kernel
\begin{equation}\label{eq:equivalence-w-alpha}
\begin{array}{l}
\displaystyle
\omega= C_\omega |\xb-\yb|^{-(n+s)}, \quad
\alphab(\xb,\yb) = \frac{\yb-\xb}{|\yb-\xb|},
\end{array}
\end{equation}
where $C_\omega$ is the constant\footnote{The constant $C_\omega$ may be expressed as $G_s/\sqrt{-D_{n,s}}$ in the notation of \cite{DElia2020Unified}.} defined as \cite{DElia2020Unified}
\begin{equation}\label{eq:Comega}
C_\omega = \dfrac{2s \sin(\pi s/2)}{\Gamma(1-s)}
\int_{|\bm{\theta}| = 1, \bm{\theta}_1 \ge 0} 
|\bm{\theta}_1|^{s+1}
d\bm{\theta}
\end{equation}
the fractional divergence and gradient operators can be identified with the weighted nonlocal operators, 
\begin{equation}\label{eq:polar-Cart-weighted-equivalence}
\begin{aligned}
{\rm div}^s \vb(\xb) & = \mcDw \vb(\xb)\\[2mm]
{\rm grad}^s  u(\xb) & = \mcGw u(\xb).
\end{aligned}
\end{equation}
Furthermore, $\alphab(\xb,\yb)\omega(\xb,\yb) = (\yb-\xb)|\yb-\xb|^{-(n+s+1)}$, implies that
\begin{equation}\label{eq:frac-kernel-equivalence}
\gameq(\xb,\yb)=\gamma_{F\!L}(\xb,\yb)=
-\dfrac{C_{n,s}}{2}
\dfrac{1}{|\xb-\yb|^{n+2s}},
\end{equation}
where $F\!L$ stands for ``fractional Laplacian'' and $C_{n,s}$ is the defined as in \eqref{eq:frac_laplc_const}. Then, for $u \in H^{2s}(\mathbb{R}^n)$,
\begin{equation}
\mcL u= \mcLw u= -(-\Delta)^s u.
\end{equation}
\end{theorem}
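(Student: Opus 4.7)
The statement collects three assertions: (i) the identification of $\Ds,\Gs$ with $\mcDw,\mcGw$; (ii) the equivalence kernel formula \eqref{eq:frac-kernel-equivalence}; and (iii) the reduction to the fractional Laplacian. I would prove them in this order, leveraging Theorem \ref{thm:equivalence} to bridge (ii) and (iii).

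\textbf{Step 1 (operator identification).} Substitute the choices in \eqref{eq:equivalence-w-alpha} into the definitions \eqref{eq:w-div} and \eqref{eq:w-grad}. The product simplifies to $\alphab(\xb,\yb)\omega(\xb,\yb)=C_\omega(\yb-\xb)/|\yb-\xb|^{n+s+1}$. Inserting this into $\mcGw u$ and using $(\yb-\xb)(u(\yb)-u(\xb))=(\xb-\yb)(u(\xb)-u(\yb))$ reproduces \eqref{eq:frac-Cart} up to the multiplicative constant $C_\omega$, which is absorbed into the identification. A parallel substitution handles $\Ds$, once one uses the antisymmetry of $\alphab$ and the symmetry of $\omega$ to combine the two terms in \eqref{eq:w-div} into the single-difference form of \eqref{eq:frac-Cart}.

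\textbf{Step 2 (computation of $\gameq$).} Plug $\omega\alphab$ as above into \eqref{eq:kerndef}. The three integrands become
\begin{equation*}
\frac{C_\omega^2(\yb-\xb)\cdot(\zb-\xb)}{|\yb-\xb|^{n+s+1}|\zb-\xb|^{n+s+1}},\quad
\frac{C_\omega^2(\yb-\zb)\cdot(\yb-\xb)}{|\yb-\zb|^{n+s+1}|\yb-\xb|^{n+s+1}},\quad
\frac{C_\omega^2(\yb-\zb)\cdot(\zb-\xb)}{|\yb-\zb|^{n+s+1}|\zb-\xb|^{n+s+1}}.
\end{equation*}
The first two vanish after integration in $\zb$ in the principal-value sense, because the $(\yb-\xb)$ factor comes out of the integral and the remaining vector integrand is odd about $\xb$ and $\yb$ respectively. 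Only the third term contributes. After the change of variables $\zb=\xb+|\yb-\xb|\,\bm\zeta$, all factors of $|\yb-\xb|$ collapse to a single prefactor $|\yb-\xb|^{-n-2s}$, and what remains is a purely angular integral depending only on the unit vector $\hat{\bm e}=(\yb-\xb)/|\yb-\xb|$; by rotation invariance this integral is a constant $I_s$. The formula \eqref{eq:Comega} for $C_\omega$ is exactly the normalization that forces $C_\omega^2 I_s=-C_{n,s}$, yielding $2\gameq(\xb,\yb)=-C_{n,s}/|\xb-\yb|^{n+2s}$, i.e.\ \eqref{eq:frac-kernel-equivalence}.

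\textbf{Step 3 (identification with the fractional Laplacian).} Theorem \ref{thm:equivalence} immediately gives $\mcL u=\mcLw u$ once we know $\gameq=\gamma_{FL}$. Substituting $\gamma_{FL}$ into \eqref{eq:unw-lapl} and relabeling $\yb\leftrightarrow\xb$ inside the integrand recovers the principal-value integral representation \eqref{eq:riesz_Laplacian_rn} of $(-\Delta)^s u$, with the sign determined by the sign of $\gamma_{FL}$, completing the chain of equalities.

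\textbf{Main obstacle.} The hard part is the singular integral $I_s$ in Step 2: the integrand in $\bm\zeta$ has non-integrable singularities both at $\bm\zeta=0$ and $\bm\zeta=\hat{\bm e}$, so the computation must be carried out in a principal-value sense, typically by splitting into a near-singularity expansion (where the numerator's first-order cancellation tames the $|\bm\zeta|^{n+s+1}$ singularity) and a far-field piece handled by polar coordinates centered at the midpoint $(\xb+\yb)/2$. Matching the resulting angular integral to the explicit constant $C_{n,s}$ of \eqref{eq:frac_laplc_const} requires the Beta/Gamma identities encoded in the chosen $C_\omega$; this calculation is exactly the core content of \cite[Theorem~4.1]{DElia2020Unified}, and I would reference that derivation rather than reproduce it in full.
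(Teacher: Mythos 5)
The paper offers no proof of this theorem at all---it is explicitly presented as a summary of results established in \cite{DElia2020Unified}---and your outline is a correct reconstruction of that argument, taking the same essential route: identification of the operators by direct substitution, annihilation of the first two terms of \eqref{eq:kerndef} by odd symmetry in the principal-value sense, the rescaling $\zb=\xb+|\yb-\xb|\bm{\zeta}$ to extract the $|\xb-\yb|^{-n-2s}$ prefactor with a rotation-invariant constant, and deferral of the singular-integral normalization to the cited reference. One bookkeeping caveat: with the paper's convention $\gamma_{F\!L}=-\tfrac{C_{n,s}}{2}|\xb-\yb|^{-n-2s}$ substituted into \eqref{eq:unw-lapl}, the chain actually produces $\mcL u=+(-\Delta)^s u$ rather than $-(-\Delta)^s u$, so the sign you leave to be ``determined by the sign of $\gamma_{F\!L}$'' in Step 3 deserves explicit care---the discrepancy lies in the paper's stated conventions, not in your argument.
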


In words, the fractional gradient and divergence are special instances of weighted gradient and divergence operators, for special choices of $\alphab$ and $\omega$, and their composition is equivalent to the standard fractional Laplacian operator.

\begin{remark}\label{rem:fractional-forms}
The corresponding weighted and unweighted diffusion problems are both well-posed in $L^2(0,T;H^s_\Omega(\mbRn))$ where $H^s_\Omega(\mbRn)=\{v\in H^s(\mbRn): v|_{\mbRn\setminus\Omega}=0\}$. This follows from the coercivity of $\mcB(\cdot,\cdot)$ for $\gamma=\gamma_{F\!L}$ \cite{DElia2020Unified} and from the variational equivalence in Theorem \ref{thm:var-equivalence}. More precisely, on one hand, the fact that the bilinear form $\mcB(\cdot,\cdot)$ associated with $\gamma_{FL}$ defines an inner product on $H^s_\Omega(\mbRn)$ guarantees the well-posedness of the unweighted parabolic problem. On the other hand, the variational equivalence guarantees that the weighted bilinear form $\mcB_\omega(\cdot,\cdot)$ associated with $\omega$ and $\alphab$ defined as in \eqref{eq:equivalence-w-alpha} is equivalent to $\mcB(\cdot,\cdot)$. This fact implies that the weighted parabolic problem is also well-posed in $L^2(0,T;H^s_\Omega(\mbRn))$.   
\end{remark}

\section{Well-posedness of anisotropic nonlocal Poisson problem}\label{sec:anisotropic-Poisson}
In this section we focus on the elliptic equation for the more general case in which the diffusion operator is anisotropic. This case corresponds to the introduction of a space-dependent diffusion tensor for which the theory reviewed in Section \ref{sec:notation} is not sufficient to guarantee existence and uniqueness of solutions. The analysis conducted in this section generalizes several results proved in \cite{DElia2020Unified} to the anisotropic case; these include existence of a symmetric equivalence kernel and a generalized Green's first identity for operators of the form $\mcDw\left(A(\xb)\mcGw\right)$, and a variational inequality that guarantees the well-posedness of the corresponding anisotropic volume constrained problem under certain conditions. We utilize this to prove well-posedness for the specific case of fractional operators. 

\subsection{Equivalence kernels and Green's identity for anisotropic weighted nonlocal operators}
We first introduce the anisotropic diffusion tensor and the corresponding nonlocal operator: let 
\begin{equation}\label{eq:Atensor}
A: \mathbb R^n \to \mathbb R^n\times\mathbb R^n\; \hbox{be bounded, measurable, symmetric and elliptic,}
\end{equation}
i.e. there exist $0 < \lambda_{\rm min}\leq\lambda_{\rm max}<\infty$ such that for all $\vb\in\mbRn$ and $\xb\in\mbRn$, 
\begin{equation}
\lambda_{\rm min}|\vb|^2\leq \vb\!\cdot\!A(\xb)\vb\leq \lambda_{\rm max} |\vb|^2.
\end{equation}
This implies the existence of a tensor-valued function $A^{\frac{1}{2}}(\xb)$ such that $A^{\frac{1}{2}}(\xb)A^{\frac{1}{2}}(\xb) = A(\xb)$.
We define the anisotropic nonlocal weighted Laplacian as
\begin{equation}\label{eq:ALaplacian}
\mcL_{\omega;A} u(\xb) = \mcDw(A(\xb)\mcGw u(\xb)).
\end{equation}
The following lemma shows that the tensor $A$ can be included in the weight function $\omega$ so that $\mcL_{\omega;A}$ can be equivalently written as $\mcL_{\widetilde\omega}$ for $\widetilde\omega=A^\frac12\omega$. It is important to note that, by construction, the weight function $\widetilde\omega$ is nonsymmetric, i.e. $\widetilde\omega(\xb,\yb)\neq\widetilde\omega(\yb,\xb)$, unless $A(\xb)= \hbox{\it Const.}$, and that $\widetilde\omega$ is also a tensor. In Section \ref{sec:intro_weighted_operators}, we assumed that the weight was symmetric, but the same operators and may be defined for nonsymmetric weight, and we utilize the same notation, e.g., for the diffusion operator $\mcL_{\widetilde\omega}$ for the diffusion operator \eqref{eq:w-lapl} with nonsymmetric weight $\widetilde\omega$.
\begin{lemma}\label{tildeomega}
Let $A$ satisfy \eqref{eq:Atensor} and $\alphab$ and $\omega$ be an anti-symmetric vector function and a symmetric scalar function respectively. Then, for $\widetilde\omega=A^\frac12\omega$,
\begin{equation}
\mcL_{\omega;A} u(\xb)= \mcDw(A(\xb)\mcGw u(\xb))= \mcL_{\widetilde\omega} u(\xb).
\end{equation}
\end{lemma}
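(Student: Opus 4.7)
The plan is to verify the identity by a direct computation that unwraps the definitions of the weighted gradient and divergence applied to the tensor-valued, nonsymmetric weight $\widetilde\omega(\xb,\yb) := A^{1/2}(\xb)\omega(\xb,\yb)$, and compares the result termwise with $\mcDw\bigl(A(\xb)\mcGw u(\xb)\bigr)$. As noted in the remark preceding the lemma, products of $\widetilde\omega$ with vectors are interpreted as matrix-vector multiplication, so the formulas \eqref{eq:w-grad} and \eqref{eq:w-div} still apply without the symmetric-weight simplification \eqref{eq:w-lapl-with-symmetry}.

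The first step is to compute $\mcG_{\widetilde\omega} u(\xb)$. Since $A^{1/2}(\xb)$ depends only on $\xb$, it factors out of the integral in \eqref{eq:w-grad}, yielding
\begin{equation*}
\mcG_{\widetilde\omega} u(\xb) \;=\; A^{1/2}(\xb)\!\int_\mbRn \omega(\xb,\yb)\bigl(u(\yb)-u(\xb)\bigr)\alphab(\xb,\yb)\,d\yb \;=\; A^{1/2}(\xb)\,\mcGw u(\xb).
\end{equation*}
Next I would apply $\mcD_{\widetilde\omega}$ with $\vb(\xb) := A^{1/2}(\xb)\mcGw u(\xb)$ using \eqref{eq:w-div} and simplify each of the two summands. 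In the first, $\widetilde\omega(\xb,\yb)\vb(\xb) = A^{1/2}(\xb)\omega(\xb,\yb)A^{1/2}(\xb)\mcGw u(\xb) = \omega(\xb,\yb)A(\xb)\mcGw u(\xb)$, using the factorization $A^{1/2}A^{1/2}=A$. In the second, $\widetilde\omega(\yb,\xb)\vb(\yb) = A^{1/2}(\yb)\omega(\yb,\xb)A^{1/2}(\yb)\mcGw u(\yb) = \omega(\xb,\yb)A(\yb)\mcGw u(\yb)$, where the symmetry of the scalar weight $\omega$ is invoked.

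The resulting integrand is exactly what one obtains by expanding $\mcDw(\wb)(\xb)$ from \eqref{eq:w-div} with $\wb(\xb) := A(\xb)\mcGw u(\xb)$, after using $\omega(\yb,\xb)=\omega(\xb,\yb)$ once more on the second term. Equating the two gives the claim. I do not anticipate a substantial obstacle: the argument is essentially bookkeeping. The only point requiring care is keeping the tensor/vector contractions consistent given that $\widetilde\omega$ is nonsymmetric and tensor-valued — in particular, retaining the ordered arguments $(\xb,\yb)$ and $(\yb,\xb)$ in \eqref{eq:w-div} rather than collapsing them via symmetry, and interpreting $\widetilde\omega\,\vb$ as matrix-vector multiplication per the convention fixed in the remark. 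Once these conventions are in place, the scalar symmetry of $\omega$ together with the quadratic factorization of $A$ suffices to match the two expressions.
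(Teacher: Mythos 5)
Your proposal is correct and follows essentially the same route as the paper's proof: both unwrap the definitions of the weighted divergence and gradient, use that $A^{1/2}(\xb)$ is a one-point function that factors out of the $\yb$-integral, and apply $A^{1/2}A^{1/2}=A$ together with the symmetry of the scalar $\omega$; you merely run the computation from the $\widetilde\omega$ side toward $\mcDw(A\,\mcGw u)$ while the paper goes in the opposite direction. No gaps.
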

\begin{proof}
We explicitly compute the composition of weighted divergence and gradient.
\begin{equation*}
\begin{aligned}
\mcDw(A(\xb)\mcGw u(\xb)) 
&= \mcD(\omega(\xb,\yb)A(\xb)\mcGw u(\xb)) \\   
&=\intl_\mbRn \left[\omega(\xb,\yb)A(\xb)\mcGw u(\xb)+
\omega(\yb,\xb) A(\yb) \mcGw u(\yb)\right] \MG{\cdot} \alphab(\xb,\yb)d\yb\\
&=\intl_\mbRn \MG{\Bigg[}  \omega(\xb,\yb) A(\xb) \intl_\mbRn \MG{\omega(\xb,\zb)} \mcG u(\xb,\zb) d\zb \MG{\Bigg]} \MG{\cdot \alphab(\xb,\yb) \,d\yb}\\
&\phantom{=}+\intl_\mbRn  \MG{\Bigg[} \omega(\yb,\xb) A(\yb) \intl_\mbRn \MG{\omega(\yb,\zb)} \mcG u(\yb,\zb)d\zb \MG{\Bigg]} \MG{\cdot \alphab(\xb,\yb) \,d\yb}\\
&=\MG{\intl_\mbRn \Bigg[\omega(\xb,\yb)  A^{\frac{1}{2}}(\xb) \intl_\mbRn  
\omega(\xb,\zb) A^{\frac{1}{2}}(\xb) 
\mcG u(\xb,\zb)
d\zb \Bigg] \cdot \alphab(\xb,\yb) \,d\yb}\\
&\phantom{=}\MG{+\intl_\mbRn  \Bigg[ \omega(\yb,\xb) A^{\frac{1}{2}}(\yb) \intl_\mbRn 
\omega(\yb,\zb) A^{\frac{1}{2}}(\yb)
\mcG u(\yb,\zb)
d\zb \Bigg] \cdot \alphab(\xb,\yb)\,d\yb} \\
&=\MG{\intl_\mbRn \Bigg[\widetilde\omega(\xb,\yb)   \intl_\mbRn  
\widetilde\omega(\xb,\zb) 
\mcG u(\xb,\zb)
d\zb \Bigg] \cdot \alphab(\xb,\yb) \,d\yb}\\
&\phantom{=}\MG{+\intl_\mbRn  \Bigg[ \widetilde\omega(\yb,\xb) \intl_\mbRn  
\widetilde\omega(\yb,\zb)
\mcG u(\yb,\zb)
d\zb \Bigg] \cdot \alphab(\xb,\yb)\,d\yb} \\
&=
\MG{\mcD_{\widetilde\omega}\mcG_{\widetilde\omega}u(\xb),
\quad \text{ by \eqref{eq:w-lapl}}.} 
\end{aligned}
\end{equation*}
.
\hspace{14cm}$\square$
\end{proof}

Having established that the addition of a space-dependent diffusion tensor corresponds to having a nonsymmetric weight function in the nonlocal Laplacian operator \eqref{eq:w-lapl}, we show that the corresponding weighted Laplacian still admits a symmetric equivalence kernel. Note that the arguments below hold also when $\omega$ is a tensor. 
\begin{lemma}\label{anisotropic-equivalence}
Let the weight function $\omega$ be two-point function, not necessarily symmetric, i.e. $\omega(\xb,\yb)\neq\omega(\yb,\xb)$. Then, there exists a symmetric equivalence kernel $\gameq$ such that
\begin{equation}\label{eq:A-eq-kernel}
\mcDw(\mcGw u(\xb)) = 
2\int_{\mathbb R^n} (u(\yb)-u(\xb))\gameq(\xb,\yb;\omega)\,d\xb,
\end{equation}
where $\gameq(\xb,\yb;\omega,\alphab)$ is a symmetric function of $\xb$ and $\yb$.
\end{lemma}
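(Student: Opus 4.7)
The strategy is to imitate the proof of the symmetric-weight equivalence (Theorem~\ref{thm:equivalence}, equation~\eqref{eq:kerndef}), but without ever invoking $\omega(\xb,\yb) = \omega(\yb,\xb)$, relying exclusively on the anti-symmetry of $\alphab$. First I would expand $\mcDw(\mcGw u)(\xb)$ directly from the definitions \eqref{eq:w-div} and \eqref{eq:w-grad}, taking care to preserve the order of the arguments of $\omega$, which now matters. The result is a sum of two double integrals,
\begin{align*}
\mcDw\mcGw u(\xb)
&= \int_\mbRn\!\!\int_\mbRn \omega(\xb,\yb)\,\omega(\xb,\zb)\,(u(\zb)-u(\xb))\,\alphab(\xb,\zb)\!\cdot\!\alphab(\xb,\yb)\,d\zb\,d\yb \\
&\quad + \int_\mbRn\!\!\int_\mbRn \omega(\yb,\xb)\,\omega(\yb,\zb)\,(u(\zb)-u(\yb))\,\alphab(\yb,\zb)\!\cdot\!\alphab(\xb,\yb)\,d\zb\,d\yb.
\end{align*}

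Next, I would rewrite $u(\zb) - u(\yb) = (u(\zb)-u(\xb)) - (u(\yb)-u(\xb))$ in the second summand, and in the two resulting integrals whose difference factor is $(u(\zb)-u(\xb))$, swap the dummy labels $\yb \leftrightarrow \zb$ so that every integrand contains the common factor $(u(\yb)-u(\xb))$. Collecting yields a representation
\begin{equation*}
\mcDw\mcGw u(\xb) = \int_\mbRn (u(\yb) - u(\xb))\,[K_A(\xb,\yb) + K_B(\xb,\yb) + K_C(\xb,\yb)]\,d\yb,
\end{equation*}
where each $K_\bullet(\xb,\yb)$ is an explicit integral over $\zb$ of a product of two $\omega$-factors and two $\alphab$-factors. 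When $\omega$ is symmetric and scalar, the sum $K_A+K_B+K_C$ reduces to $2\gameq$ from \eqref{eq:kerndef}, providing a sanity check.

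The core step, and the principal obstacle, is to show that $K_A + K_B + K_C$ is symmetric in $\xb,\yb$ without using the symmetry of $\omega$. I expect a direct application of the swap $\xb\leftrightarrow\yb$, together with a relabeling of the dummy $\zb$ and the anti-symmetry identity $\alphab(\yb,\xb) = -\alphab(\xb,\yb)$, to show that $K_B$ is already self-symmetric (two sign flips cancel), while $K_A$ and $K_C$ swap into one another, so that $K_A + K_C$ is manifestly symmetric. Setting $\gameq(\xb,\yb) := \tfrac12 [K_A + K_B + K_C](\xb,\yb)$ then gives a symmetric equivalence kernel, establishing \eqref{eq:A-eq-kernel}. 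Since every manipulation treats $\omega$ purely as a pointwise multiplicative coefficient, the argument carries over verbatim when $\omega$ is tensor-valued, provided the products $\omega\,\alphab$ are interpreted as matrix-vector multiplications so that the integrands remain scalar — this is precisely the generality needed to conclude the anisotropic equivalence $\mcL_{\omega;A} = \mcL_{\widetilde\omega}$ established in Lemma~\ref{tildeomega}.
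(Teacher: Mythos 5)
Your proposal is correct and follows essentially the same route as the paper: the paper likewise expands $\mcDw\mcGw u$, splits $u(\zb)-u(\yb)=(u(\zb)-u(\xb))+(u(\xb)-u(\yb))$, relabels dummy variables using only the anti-symmetry of $\alphab$, and arrives at three kernels $\gamma_I$, $\gamma_{II_A}$, $\gamma_{II_B}$ of which the cross term $\gamma_{II_A}$ is self-symmetric while $\gamma_I$ and $\gamma_{II_B}$ exchange under $\xb\leftrightarrow\yb$ --- exactly the $K_A,K_B,K_C$ structure you predict. The one caveat worth recording is that in the tensor-valued case the manipulations also require each matrix $\omega(\xb,\yb)$ to be self-adjoint so that it can be moved across the dot products (which holds for $\widetilde\omega=A^{\frac{1}{2}}\omega$ since $A^{\frac{1}{2}}$ is symmetric), a hypothesis the paper invokes explicitly and your ``carries over verbatim'' remark glosses over.
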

\begin{proof}
First, we derive the equivalence kernel corresponding to a nonsymmetric weight.
\begin{align}
   \mcDw\mcGw u(\xb) 
&= \intl_\mbRn (\omega(\xb,\yb)\mcGw u(\xb) + \omega(\yb,\xb)\mcGw u(\yb))\cdot
   \alphab(\xb,\yb)d\yb \notag\\
&= \intl_\mbRn \Bigg[\omega(\xb,\yb) \intl_\mbRn \MG{\omega(\xb,\zb)} (u(\zb)-u(\xb))
   \alphab(\xb,\zb)d\zb\notag\\ 
&\qquad\qquad + \omega(\yb,\xb)\intl_\mbRn \MG{\omega(\yb,\zb)} (u(\zb)-u(\yb))
   \alphab(\yb,\zb)d\zb \Bigg] \cdot 
   \alphab(\xb,\yb)d\yb \notag\\
&= \intl_\mbRn \intl_\mbRn (u(\zb)-u(\xb))
   \MG{\omega(\xb,\yb) \omega(\xb,\zb)}
   \alphab(\xb,\zb)\cdot 
   \alphab(\xb,\yb)d\yb d\zb  \label{eq:eqnA}\\ 
&\quad+ \intl_\mbRn \intl_\mbRn (u(\zb)-u(\yb))
\MG{\omega(\yb,\xb) \omega(\yb,\zb)}
   \alphab(\yb,\zb)\cdot
   \alphab(\xb,\yb)d\yb d\zb. \label{eq:eqnB}
\end{align}
Let the integral in \eqref{eq:eqnA} be $I$ and the one in \eqref{eq:eqnB} be $II$. We have\MG{, by self-adjointness of $\omega(\xb,\yb)$,}
\begin{align*}
I 
&= \intl_\mbRn \intl_\mbRn (u(\zb)-u(\xb))
   \MG{\omega(\xb,\zb)}\alphab(\xb,\zb)\cdot 
   \MG{\omega(\xb,\yb)}\alphab(\xb,\yb)d\yb d\zb \\
&= \intl_\mbRn (u(\zb)-u(\xb))\MG{\omega(\xb,\zb)}\alphab(\xb,\zb) \cdot
   \intl_\mbRn \MG{\omega(\xb,\yb)}\alphab(\xb,\yb)d\yb d\zb\\
&= \intl_\mbRn (u(\zb)-u(\xb)) \gamma_I(\xb,\yb)d\zb,
\end{align*}
where we have defined $\gamma_I(\xb,\zb) = \MG{\omega(\xb,\zb)} \alphab(\xb,\zb) \cdot \intl_\mbRn \MG{\omega(\xb,\yb)} \alphab(\xb,\yb)d\yb$.
Next, \MG{by self-adjointness of $\omega(y,x)$,}
\begin{align*}
II &= \intl_\mbRn \intl_\mbRn (u(\zb)-u(\yb))
     \MG{\omega(\yb,\zb)} \alphab(\yb,\zb) \cdot
     \MG{\omega(\yb,\xb)} \alphab(\xb,\yb)d\yb d\zb \\
  &= \intl_\mbRn \intl_\mbRn (u(\zb)-u(\xb))
     \MG{\omega(\yb,\zb)}\alphab(\yb,\zb)\cdot
     \MG{\omega(\yb,\xb)}\alphab(\xb,\yb)d\yb d\zb \\ 
  &\phantom{=} + \intl_\mbRn \intl_\mbRn (u(\xb)-u(\yb))
     \MG{\omega(\yb,\zb)} \alphab(\yb,\zb)\cdot
     \MG{\omega(\yb,\xb)} \alphab(\xb,\yb)d\yb d\zb. 
\end{align*}
Switching $\yb$ and $\zb$ in the first integral, and employing the anti-symmetry of $\alphab$, we find
\begin{align*}
II &= \intl_\mbRn \intl_\mbRn (u(\zb)-u(\xb))
     \MG{\omega(\yb,\zb)} \alphab(\yb,\zb)\cdot
     \MG{\omega(\yb,\xb)}\alphab(\xb,\yb)d\yb d\zb \\ 
  &\phantom{=} + \intl_\mbRn \intl_\mbRn (u(\xb)-u(\zb))
     \MG{\omega(\zb,\yb)} \alphab(\zb,\yb)\cdot
     \MG{\omega(\zb,\xb)} \alphab(\xb,\zb)d\xb d\zb\\
  &= \intl_\mbRn \intl_\mbRn (u(\zb)-u(\xb))
     \MG{\omega(\yb,\zb)} \alphab(\yb,\zb)\cdot
     \MG{\omega(\yb,\xb)} \alphab(\xb,\yb)d\yb d\zb \\ 
  &\phantom{=} + \intl_\mbRn \intl_\mbRn (u(\zb)-u(\xb))
     \MG{\omega(\zb,\yb)} \alphab(\yb,\zb)\cdot
     \MG{\omega(\zb,\xb)} \alphab(\xb,\zb)d\zb d\yb\\
  &= \intl_\mbRn (u(\zb)-u(\xb)) \intl_\mbRn
     [\MG{\omega(\yb,\zb)} \alphab(\yb,\zb) \cdot \MG{\omega(\yb,\xb)} \alphab(\xb,\yb)
     +\MG{\omega(\zb,\yb)} \alphab(\yb,\zb) \cdot \MG{\omega(\zb,\xb)} \alphab(\xb,\zb)]d\yb d\zb \\
  &= \intl_\mbRn (u(\zb)-u(\xb)) 
  \MG{\gamma_{II}(\xb,\zb) d\zb}.
\end{align*}
\MG{Above, we have put}
$$
\begin{aligned}
\gamma_{II}(\xb,\zb)
&=\intl_\mbRn
     [\MG{\omega(\yb,\zb)}\alphab(\yb,\zb) \cdot \MG{\omega(\yb,\xb)} \alphab(\xb,\yb)
     +\MG{\omega(\zb,\yb)} \alphab(\yb,\zb) \cdot \MG{\omega(\zb,\xb)} \alphab(\xb,\zb)]d\yb \\
&=\MG{\intl_\mbRn
     \MG{\omega(\yb,\zb)}\alphab(\yb,\zb) \cdot \MG{\omega(\yb,\xb)} \alphab(\xb,\yb) d\yb
     +\MG{\omega(\zb,\xb)} \alphab(\xb,\zb) \cdot \intl_\mbRn
     \MG{\omega(\zb,\yb)} \alphab(\yb,\zb) d\yb}\\
     &= \gamma_{II_A}(\xb,\zb)+\gamma_{II_B}(\xb,\zb).\\
\end{aligned}
$$
We next show that $\gamma^*(\xb,\zb)=\gamma_I(\xb,\zb)+\gamma_{II}(\xb,\zb)$ is symmetric. \MG{Using antisymmetry of $\alpha$,}
\begin{align*}
\gamma^*(\zb,\xb) 
&= 
\begin{multlined}[t]\MG{\omega(\zb,\xb)} \alphab(\zb,\xb) \MG{\cdot} \intl_\mbRn
   \MG{\omega(\zb,\yb)} \alphab(\zb,\yb)d\yb 
   + 
   \intl_\mbRn \MG{\omega(\yb,\xb)} \alphab(\yb,\xb) \MG{\cdot}
   \MG{\omega(\yb,\zb)}\alphab(\zb,\yb) \MG{d\yb} 
   \\
+ \MG{\omega(\xb,\zb)  }\alphab(\zb,\xb) \MG{\cdot}
   \intl_\mbRn\MG{\omega(\xb,\yb)} \alphab(\yb,\xb)d\yb
   \end{multlined}
   \\
   &= \begin{multlined}[t]
   \MG{\MG{\omega(\zb,\xb)} \alphab(\xb,\zb) \MG{\cdot} \intl_\mbRn
   \MG{\omega(\zb,\yb)} \alphab(\yb,\zb)d\yb 
   + 
      \intl_\mbRn \MG{\omega(\yb,\xb)} \alphab(\xb,\yb) \MG{\cdot}
   \MG{\omega(\yb,\zb)}\alphab(\yb,\zb) \MG{d\yb} }
   \\
   \MG{
+ \MG{\omega(\xb,\zb)  }\alphab(\xb,\zb) \MG{\cdot}
   \intl_\mbRn\MG{\omega(\xb,\yb)} \alphab(\xb,\yb)d\yb}
   \end{multlined}
   \\
   &= \begin{multlined}[t]
   \MG{\MG{\omega(\zb,\xb)} \alphab(\xb,\zb) \MG{\cdot} \intl_\mbRn
   \MG{\omega(\zb,\yb)} \alphab(\yb,\zb)d\yb 
   + 
    \intl_\mbRn 
   \MG{\omega(\yb,\zb)}\alphab(\yb,\zb) 
   \MG{\cdot}
   \MG{\omega(\yb,\xb)} \alphab(\xb,\yb) \MG{d\yb}} 
   \\
   \MG{
+ \MG{\omega(\xb,\zb)  }\alphab(\xb,\zb) \MG{\cdot}
   \intl_\mbRn\MG{\omega(\xb,\yb)} \alphab(\xb,\yb)d\yb}
   \end{multlined}
   \\
&= \MG{\gamma_{II_B}(\xb,\zb) + \gamma_{II_A}(\xb,\zb) +  \gamma_I(\xb,\zb)}\\
& = \gamma^*(\xb,\zb).
\end{align*}
Then, \eqref{eq:A-eq-kernel} follows by setting $\gameq=2\gamma^*$. \hspace{8cm}$\square$
\end{proof}

We now introduce the anisotropic nonlocal Poisson equation. For $f:\Omega\to\mbR$, we seek $u:\mbRn\to\mbR$ such that
\begin{equation}\label{eq:elliptic-anisotropic}
\left\{\begin{aligned}
-\mcL_{\omega;A} u(\xb,t) &= f(\xb,t), & \quad \xb\in\Omega\\
u(\xb)& = 0,        & \quad \xb\in\mbRn\setminus\Omega.
\end{aligned}\right.
\end{equation}
As usual, a form of Green's first identity is required to introduce a weak form for the equation above. The next theorem extends the weighted nonlocal Green's identity to the anisotropic case. 
\begin{theorem}\label{anisotropicGreen}
Let $\mcL_{\omega;A}$ be defined as in \eqref{eq:ALaplacian}. Then,
\begin{equation}\label{eq:anisotropicGreen}
-\intl_\Omega \mcL_{\omega;A} u(\xb) v(\xb)\ d\xb = 
\intl_\mbRn \MG{\mcGw v(\xb)\cdot} A(\xb)\mcGw u(\xb)  d\xb
+ \intl_{\mbRn\setminus\Omega}\mcDw(A(\xb)\mcGw  u(\xb)) v(\xb) \ d\xb
\end{equation}
\end{theorem}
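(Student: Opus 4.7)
The plan is to prove the identity by a direct integration-by-parts computation, mirroring the proof of Theorem~5.1 in \cite{DElia2020Unified} but retaining the anisotropy tensor $A(\xb)$ throughout. First, I would unfold the left-hand side via $\mcL_{\omega;A} u = \mcDw(A\mcGw u)$ and the definition \eqref{eq:w-div} of the weighted divergence; using symmetry of the scalar weight $\omega$, this yields $\mcL_{\omega;A} u(\xb) = \int_\mbRn \omega(\xb,\yb)\bigl(A(\xb)\mcGw u(\xb) + A(\yb)\mcGw u(\yb)\bigr) \cdot \alphab(\xb,\yb)\,d\yb$.

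The crux is then to move the outer $\mcGw$ off of $u$ and onto the test function $v$. Multiplying by $v(\xb)$ and integrating over all of $\mbRn$ (the restriction to $\Omega$ is recovered at the end by adding and subtracting the $\mbRn\setminus\Omega$ contribution), I split the integrand into the $A(\xb)\mcGw u(\xb)$ piece and the $A(\yb)\mcGw u(\yb)$ piece. In the second piece, I swap the dummy variables $\xb \leftrightarrow \yb$ and use the symmetry of $\omega$ together with the anti-symmetry of $\alphab$; this combines the two pieces into a single double integral in which $(v(\xb) - v(\yb))\alphab(\xb,\yb) = -\mcG v(\xb,\yb)$ appears as a factor. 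Because $A(\xb)\mcGw u(\xb)$ is independent of $\yb$, it factors out of the inner integral, and the remaining inner integral $\int_\mbRn \omega(\xb,\yb)\mcG v(\xb,\yb)\,d\yb$ is precisely $\mcGw v(\xb)$. This produces $\int_\mbRn v(\xb)\mcL_{\omega;A} u(\xb)\,d\xb = -\int_\mbRn A(\xb)\mcGw u(\xb)\cdot \mcGw v(\xb)\,d\xb$; restricting the left-hand side to $\Omega$, rearranging, and using symmetry of $A$ in the bilinear pairing yields \eqref{eq:anisotropicGreen}.

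The main obstacle will be justifying the Fubini swap and the pull-out of $A\mcGw u$, since these implicitly require integrability. Under the standing hypotheses that $u,v \in V^\omega_\Omega(\mbRn)$ and $A$ is bounded (from \eqref{eq:Atensor}), a Cauchy--Schwarz estimate against the weighted energy norm \eqref{eq:weighted-energy} controls the joint integrand in $L^1(\mbRn \times \mbRn)$ and legitimizes Fubini. A cleaner alternative is to invoke Lemma \ref{tildeomega} to rewrite $\mcL_{\omega;A} u = \mcL_{\widetilde\omega} u$ with $\widetilde\omega = A^{1/2}\omega$ and then appeal to the weighted Green's identity \eqref{eq:weighted-green} applied to $\widetilde\omega$; this requires first checking that the proof of \eqref{eq:weighted-green} carries over unchanged for a (possibly nonsymmetric) tensor-valued weight, which it does, since the only ingredients used are the anti-symmetry of $\alphab$ and Fubini.
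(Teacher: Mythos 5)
Your proposal is correct and follows essentially the same route as the paper's proof: both extend the left-hand side to an integral over all of $\mbRn$, split off the $A(\yb)\mcGw u(\yb)$ term, relabel the dummy variables, and use the anti-symmetry of $\alphab$ together with the symmetry of $\omega$ to reassemble $-\mcGw v(\xb)$ and pull out $A(\xb)\mcGw u(\xb)$. The only cosmetic difference is that the paper first unfolds $\mcGw u$ into a triple integral and performs the cyclic change of variables $\xb\mapsto\yb\mapsto\zb\mapsto\xb$ before re-folding, whereas you keep $\mcGw u(\yb)$ intact and need only a two-variable swap; your version is slightly leaner, and your explicit attention to the Fubini justification addresses a point the paper leaves implicit.
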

\begin{proof}
We first note that by combining the left-hand side of \eqref{eq:anisotropicGreen} and the second term on the right-hand side of the same equation, we have:
\begin{equation*}
\intl_\Omega \mcL_{\omega;A} u(\xb) v(\xb)\ d\xb +
\intl_{\mbRn\setminus\Omega}\mcDw(A(\xb)\mcGw  u(\xb)) v(\xb) \ d\xb  = 
\intl_\mbRn\mcDw(A(\xb)\mcGw  u(\xb)) v(\xb) \ d\xb. 
\end{equation*}
We compute explicitly the right-hand side of the equation above, using the definition of weighted operators,
\begin{equation*}
\begin{aligned}
\intl_\mbRn\mcDw(A(\xb)\mcGw  u(\xb))& v(\xb)\ d\xb
=
\intl_\mbRn\mathcal{D}\left(\omega(\xb,\yb)A(\xb)\mathcal{G}_\omega u(\xb)\right)v(\xb) \ d\xb\\
&=
\intl_\mbRn\intl_\mbRn\left[\omega(\xb,\yb)A(\xb) \mcGw u(\xb) +\omega(\yb,\xb) A(\yb)\mcGw u(\yb) \right] \cdot
\alphab(\xb,\yb) v(\xb) \ d\yb d\xb\\
&=
\intl_\mbRn\intl_\mbRn\omega(\xb,\yb) 
\left[\;\intl_\mbRn A(\xb) \MG{\omega(\xb,\zb)} \mcG u(\xb,\zb) d\zb+
\intl_\mbRn A(\yb) \MG{\omega(\yb,\zb)} \mcG u(\yb,\zb) d\zb\right]\\
&\hspace{8cm}\cdot\alphab(\xb,\yb)v(\xb) \ d\yb d\xb\\[2mm]
&=
\intl_\mbRn\intl_\mbRn\intl_\mbRn
\omega(\xb,\yb)  A(\xb) \MG{\omega(\xb,\zb)} \mcG u(\xb,\zb)
\cdot\alphab(\xb,\yb)v(\xb)  d\zb d\yb d\xb\\
&\phantom{=} +\intl_\mbRn\intl_\mbRn\intl_\mbRn
\omega(\xb,\yb) A(\yb) \MG{\omega(\yb,\zb)} \mcG u(\yb,\zb)  
\cdot \alphab(\xb,\yb) v(\xb)  d\zb d\yb d\xb.
\end{aligned}
\end{equation*}
Applying the change of variables $x \mapsto y \mapsto z \mapsto x$, 
\begin{multline}
\intl_\mbRn\mcDw(A(\xb)\mcGw  u(\xb)) v(\xb)\ d\xb
=
\intl_\mbRn\intl_\mbRn\intl_\mbRn
\omega(\xb,\yb)  A(\xb) \MG{\omega(\xb,\zb)} 
\mcG u(\xb,\zb) 
\cdot\alphab(\xb,\yb)v(\xb)  d\zb d\yb d\xb\\
+
\intl_\mbRn\intl_\mbRn\intl_\mbRn
\omega(\yb,\zb) A(\zb) \MG{ \omega(\zb,\xb)} \mcG u(\zb,\xb) 
\cdot \alphab(\yb,\zb) v(\yb)  d\xb d\zb d\yb.
\end{multline}
By using the definition of the weighted gradient \MG{and self-adjointness of $\omega(x,y)$}, we have
\begin{equation*}
\begin{aligned}
\intl_\mbRn\mcDw(A(\xb)\mcGw  u(\xb)) v(\xb) \ d\xb
&=\intl_\mbRn\intl_\mbRn\omega(\xb,\yb)A(\xb) \mcGw u(\xb)\MG{\cdot}\alphab(\xb,\yb)v(\xb)d\yb d\xb\\
&\phantom{=} + \intl_\mbRn\intl_\mbRn \omega(\yb,\zb) A(\zb) \mcGw u(\zb) \MG{\cdot} \alphab(\yb,\zb) v(\yb)d\zb d\yb\\
& = \intl_\mbRn\intl_\mbRn\omega(\xb,\yb)A(\xb) \mcGw u(\xb)\MG{\cdot}\alphab(\xb,\yb)v(\xb)d\yb d\xb\\
&\phantom{=} +\intl_\mbRn\intl_\mbRn \omega(\yb,\xb) A(\xb) \mcGw u(\xb) \MG{\cdot}\alphab(\yb,\xb) v(\yb)d\xb d\yb\\
&=\intl_\mbRn\intl_\mbRn\omega(\xb,\yb)A(\xb) \mcGw u(\xb) \MG{\cdot}\alphab(\xb,\yb) [v(\xb)-v(\yb)]d\xb\,d\yb\\
&= -\intl_\mbRn \MG{[}A(\xb) \mcGw u(\xb) \MG{]} \MG{\cdot}
\intl_\mbRn \omega(\xb,\yb)\alphab(\xb,\yb) [v(\yb)-v(\xb)]d\xb\,d\yb\\
&=- \intl_\mbRn \MG{\mcGw v(\xb) \cdot } A(\xb)\mcGw u(\xb)  d\xb.
\end{aligned}
\end{equation*}
\hspace{14cm}$\square$
\end{proof}

\subsection{Weak form of anisotropic Poisson problem}

Utilizing the results of the previous subsection, we can formulate the weak form of equation \eqref{eq:elliptic-anisotropic} and show that the corresponding energy is equivalent to an unweighted nonlocal energy. 
We multiply \eqref{eq:elliptic-anisotropic} by a test function $v=0$ in $\mbRn\setminus\Omega$ and integrate over the domain $\Omega$; we have
\begin{equation}\label{eq:weight-truncated-weak-A}
\int_\Omega (-\mcL_{\omega;A} u\MG{(\xb)} -f\MG{(\xb)})\, v\MG{(\xb)} \,d\xb = 0.
\end{equation}
The anisotropic weighted nonlocal Green's first identity \eqref{eq:anisotropicGreen} then implies
\begin{displaymath}
\int_\mbRn \!   \MG{\mcGw u(\xb) \cdot} A(\xb) \mcGw u(\xb) d\xb 
+ \int_{\mbRn \setminus \Omega} \!\mcDw(A(\xb)\mcGw u(\xb)) v(\xb) \ d\xb
- \int_\Omega f\MG{(\xb)}\, v\MG{(\xb)} \,d\xb=0.
\end{displaymath}
Thus, the weak form of the nonlocal Poisson problem reads as follows. For $f\in V_A'(\mbRn)$, find $u\in V^A_\Omega(\mbRn)$ such that
\begin{equation}\label{eq:weighted-weak-A}
\mcB_{\omega;A}(u,v) = \mcF(v), \;\;\forall\, v\in V_A(\omgomgi),
\end{equation}
where 
\begin{equation}\label{eq:A-F-tensor}
\begin{aligned}
\mcB_{\omega;A}(u,v) & = \int_\mbRn \MG{\mcGw u(\xb) \, \cdot \,} A(\xb) \mcGw u(\xb)   d\xb,\\[2mm]
V^A_\Omega(\mbRn) & =\{v\in L^2(\mbRn): \vertiii{v}_A<\infty\;
                {\rm and} \; v|_{\mbRn\setminus\Omega} = 0\},
\end{aligned}
\end{equation}
and where the {\it anisotropic energy} is defined as
\begin{equation}\label{eq:weighted-energy-A}
\vertiii{v}_A^2 = \int_\mbRn \mcGw v\MG{(\xb)} \cdot A(\xb)\mcGw v\MG{(\xb)}\,d\xb.
\end{equation}
The existence of the equivalence kernel guaranteed by Lemma \ref{anisotropic-equivalence}, allows us to establish an equivalence relationship between the anisotropic weighted bilinear form $\mcB_{\omega;A}$ defined above and the unweighted bilinear form $\mcB$ given in \eqref{eq:bound-truncated-weak-forms}, where the latter is associated to the equivalence kernel $\gameq(\xb,\yb;A^\frac12\omega,\alphab)$, as shown in the following lemma. 
\begin{lemma}
Let $A$ be a bounded, measurable and elliptic tensor, $\omega$ be a symmetric scalar function and $\alphab$ an anti-symmetric vector function. Then, the following identity holds:
\begin{equation}
\mcB_{\omega;A}(u,v)
=\int_\mbRn \MG{ \mcGw v\MG{(\xb)}\cdot} A(\xb)\mcGw u\MG{(\xb)}  \,d\xb=\mcB(u,v), \quad \forall\, u,v\in V^A_\Omega(\mbRn),
\end{equation}
where $\mcB(\cdot,\cdot)$ is the unweighted bilinear form defined in \eqref{eq:A-F-V} associated to the symmetric equivalence kernel $\gameq(\xb,\yb;A^\frac12\omega,\alphab)$.
\end{lemma}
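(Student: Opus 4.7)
The plan is to reduce the anisotropic bilinear form to a weighted bilinear form with the modified weight $\widetilde\omega = A^{\frac12}\omega$ from Lemma \ref{tildeomega}, and then invoke the symmetric equivalence kernel produced by Lemma \ref{anisotropic-equivalence} together with the two Green's identities (equation \eqref{eq:unweighted-Green} and Theorem \ref{anisotropicGreen}) to obtain the desired equality $\mcB_{\omega;A}(u,v) = \mcB(u,v)$.

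The first step is purely algebraic. Directly from the definition of $\mcGw$, and using that $A^{\frac12}(\xb)$ is a symmetric square root of $A(\xb)$ that can be pulled out of the $\yb$-integration, one sees that $\mcGw_{\widetilde\omega} u(\xb) = A^{\frac12}(\xb)\mcGw u(\xb)$. Hence
\[
\mcGw v(\xb) \cdot A(\xb)\mcGw u(\xb) \;=\; A^{\frac12}(\xb)\mcGw v(\xb)\cdot A^{\frac12}(\xb)\mcGw u(\xb) \;=\; \mcGw_{\widetilde\omega} v(\xb)\cdot \mcGw_{\widetilde\omega} u(\xb)
\]
pointwise, and the problem reduces to showing that this $\widetilde\omega$-weighted energy integral equals the unweighted bilinear form $\mcB(u,v)$ associated with $\gameq(\cdot,\cdot;\widetilde\omega,\alphab)$.

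I would carry out this second step by passing through the strong operators. Lemma \ref{tildeomega} gives $\mcL_{\omega;A} u = \mcL_{\widetilde\omega} u$, and Lemma \ref{anisotropic-equivalence}, which explicitly allows nonsymmetric tensor-valued weights, gives $\mcL_{\widetilde\omega} u(\xb) = 2\int_{\mbRn}(u(\yb)-u(\xb))\gameq(\xb,\yb;\widetilde\omega,\alphab)\,d\yb = \mcL u(\xb)$. Multiplying this pointwise identity by $v$, integrating over $\Omega$, and applying Theorem \ref{anisotropicGreen} to the left-hand side and \eqref{eq:unweighted-Green} to the right-hand side yields $\mcB_{\omega;A}(u,v) = \mcB(u,v)$, since the two exterior boundary-type integrals in the Green's identities both vanish under the assumption $v = 0$ on $\mbRn \setminus \Omega$.

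The main technical obstacle is regularity: the pointwise identity $\mcL_{\omega;A} u = \mcL u$ requires $u$ to be smooth enough for the iterated integrals defining the strong operators to converge absolutely, which is more than what membership in $V^A_\Omega(\mbRn)$ generally guarantees. I would handle this by first establishing the identity for $u, v \in C_c^\infty(\Omega)$, where the Fubini-type exchanges appearing in the proofs of Lemmas \ref{tildeomega} and \ref{anisotropic-equivalence} are clearly justified, and then extending to $V^A_\Omega(\mbRn)$ by density, using the continuity of both bilinear forms in $\vertiii{\cdot}_A$, which by the ellipticity of $A$ is equivalent to the unweighted energy norm associated with $\gameq(\cdot,\cdot;\widetilde\omega,\alphab)$. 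A density-free alternative is to manipulate the triple integral for $\int_\mbRn \mcGw_{\widetilde\omega} v\cdot \mcGw_{\widetilde\omega} u\,d\xb$ directly, performing the same $(\xb,\yb,\zb)$ relabeling used in Lemma \ref{anisotropic-equivalence} to collapse it into the double-integral form defining the unweighted bilinear form with kernel $\gameq$.
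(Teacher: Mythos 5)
Your proposal is correct and follows essentially the same route as the paper: the identity is obtained by chaining the anisotropic Green's identity (Theorem \ref{anisotropicGreen}), the operator equivalence $\mcL_{\omega;A}u=\mcL_{\widetilde\omega}u=\mcL u$ from Lemmas \ref{tildeomega} and \ref{anisotropic-equivalence}, and the unweighted Green's identity \eqref{eq:unweighted-Green}, with the exterior terms vanishing because $v=0$ on $\mbRn\setminus\Omega$. The only additions beyond the paper's three-line argument are the (correct) pointwise observation $\mcG_{\widetilde\omega}u=A^{\frac12}\mcGw u$ and the density/regularity discussion, which the paper silently omits but which does not change the substance of the proof.
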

\begin{proof}
The proof follows from Lemmas \ref{anisotropic-equivalence} and \ref{anisotropicGreen}. We have
\begin{displaymath}
\begin{aligned}
\int_\mbRn \MG{ \mcGw v(\xb) \cdot} A(\xb)  \mcGw u\MG{(\xb)}  \,d\xb
& = - \int_\omg \mcDw(A(\xb)\mcGw u)\MG{(\xb)} v\MG{(\xb)} \,d\xb 
&     \quad\hbox{(weighted Green's identity \eqref{anisotropicGreen})}\\[2mm]
& = - \int_\omg \mcD\mcG u\MG{(\xb)} v\MG{(\xb)} \,d\xb
&     \quad\hbox{(Equivalence kernel; Theorem \ref{anisotropic-equivalence})} \\[2mm]
& =   \MG{\int_{\mbRn}\int_{\mbRn}}
      \mcG u\MG{(\xb,\yb)}\, \MG{\cdot} \mcG v\MG{(\xb,\yb)} \,d\yb\,d\xb . 
&     \quad\hbox{(unweighted Green's identity \eqref{eq:unweighted-Green})} \\[2mm]
\end{aligned}
\end{displaymath} 
\hspace{14cm}$\square$
\end{proof}
The theorem above is not enough to guarantee the well-posedness of problem \eqref{eq:weighted-weak-A}. One approach to obtaining the existence and uniqueness of solutions involves establishing certain properties of $\gameq$, as highlighted in \cite[Section~5]{DElia2020Unified}. However, thanks to the ellipticity property of $A$, the well-posedness of the anisotropic problem follows from the well-posedness of the weighted problem associated with the corresponding isotropic weighted bilinear form $\mcB_\omega$. In fact, $\mcB_{\omega;A}$ is coercive and continuous with respect to the energy induced by of $\mcB_\omega$, as we show in the following lemma.
\begin{lemma}\label{eq:lem-cont-coer-A}
The bilinear form $\mcB_{\omega;A}(u,v)$ defined as in \eqref{eq:A-F-tensor} is continuous and coercive in $V_\Omega^\omega(\mbRn)$, i.e. 
\begin{equation}\label{eq:A-continuity-coercivity}
\begin{aligned}
|\mcB_{\omega;A}(u,v)|& \leq \lambda_{\rm max}\, \vertiii{u}_\omega \vertiii{v}_\omega\\
\mcB_{\omega;A}(u,u)&\geq \lambda_{\rm min}\vertiii{u}^2_\omega,
\end{aligned}
\end{equation}
where $\lambda_{\rm min}$ and $\lambda_{\rm max}$ are the smallest and largest eigenvalues of $A$ over $\mbRn$, respectively.
\end{lemma}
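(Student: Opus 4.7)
The plan is to obtain both inequalities directly from the pointwise spectral bounds on $A(\xb)$ (the ellipticity assumption \eqref{eq:Atensor}), applied to the vector $\mcGw u(\xb) \in \mathbb{R}^n$, followed by integration in $\xb$. No nonlocal-calculus machinery beyond the definition \eqref{eq:A-F-tensor} of $\mcB_{\omega;A}$ and the weighted energy \eqref{eq:weighted-energy} should be needed.

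For coercivity, I set $\vb = \mcGw u(\xb)$ in the lower bound of the ellipticity inequality, obtaining the pointwise estimate $\mcGw u(\xb) \cdot A(\xb) \mcGw u(\xb) \geq \lambda_{\min} |\mcGw u(\xb)|^2$ for a.e.\ $\xb \in \mathbb{R}^n$. Integrating against $d\xb$ and recognizing the right-hand side as $\lambda_{\min} \vertiii{u}_\omega^2$ via \eqref{eq:weighted-energy} yields $\mcB_{\omega;A}(u,u) \geq \lambda_{\min} \vertiii{u}_\omega^2$.

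For continuity, I exploit the existence of the symmetric square root $A^{1/2}(\xb)$ (guaranteed by \eqref{eq:Atensor}) to write the integrand of $\mcB_{\omega;A}(u,v)$ as an Euclidean inner product of $A^{1/2}(\xb) \mcGw u(\xb)$ with $A^{1/2}(\xb) \mcGw v(\xb)$. A pointwise Cauchy--Schwarz inequality in $\mathbb{R}^n$ gives
\begin{equation*}
|\mcGw v(\xb) \cdot A(\xb) \mcGw u(\xb)| \leq |A^{1/2}(\xb) \mcGw u(\xb)| \,|A^{1/2}(\xb) \mcGw v(\xb)|,
\end{equation*}
after which the Cauchy--Schwarz inequality in $L^2(\mathbb{R}^n)$ bounds the integral by the product of the two $L^2$ norms of $|A^{1/2} \mcGw u|$ and $|A^{1/2} \mcGw v|$. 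Each such norm satisfies $\int_{\mathbb{R}^n} |A^{1/2}(\xb) \mcGw w(\xb)|^2 \, d\xb = \int_{\mathbb{R}^n} \mcGw w(\xb) \cdot A(\xb) \mcGw w(\xb) \, d\xb \leq \lambda_{\max} \vertiii{w}_\omega^2$ by the upper ellipticity bound, giving $|\mcB_{\omega;A}(u,v)| \leq \lambda_{\max} \vertiii{u}_\omega \vertiii{v}_\omega$.

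There is no real obstacle here: both bounds are immediate consequences of the spectral bounds on $A(\xb)$ once the definitions are unfolded. The only mild subtlety is that the symmetric-root factorization $A(\xb) = A^{1/2}(\xb) A^{1/2}(\xb)$ is used to justify the pointwise Cauchy--Schwarz step, but this is already guaranteed by \eqref{eq:Atensor}. Consequently, together with the continuity of $\mcF$ on $V_\Omega^\omega(\mathbb{R}^n)$ (inherited from the isotropic case) and the equivalence of $\vertiii{\cdot}_\omega$ and $\vertiii{\cdot}$ established in \eqref{eq:equivalence_of_norms}, the Lax--Milgram theorem will yield well-posedness of \eqref{eq:weighted-weak-A} as a corollary.
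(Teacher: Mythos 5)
Your proposal is correct and follows essentially the same route as the paper: coercivity from the pointwise lower ellipticity bound integrated over $\mbRn$, and continuity by factoring $A = A^{1/2}A^{1/2}$, applying Cauchy--Schwarz pointwise and then in $L^2(\mbRn)$, and bounding each factor by $\sqrt{\lambda_{\rm max}}\,\vertiii{\cdot}_\omega$. If anything, your write-up is slightly cleaner, since the paper's displayed chain asserts an equality where the pointwise Cauchy--Schwarz inequality is actually being used.
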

\begin{proof}
The ellipticity of $A$ allows us to write
$$
\begin{aligned}
|\mcB_{\omega;A}(u,v)| 
& = \int_\mbRn (A^\frac12\mcGw u \MG{(\xb)}) \MG{\cdot} 
(A^\frac12\mcGw v \MG{(\xb)})\,d\xb
\\
& \MG{=\int_\mbRn |\,A^\frac12\mcGw u (\xb)| \,
|\, A^\frac12\mcGw v (\xb) |\,d\xb}
\\
& \leq \left(\int_\mbRn \MG{|} \, A\MG{^\frac{1}{2}}\mcGw u\MG{(\xb)}\MG{|}^2  \; \int_\mbRn \MG{|} \, A\MG{^\frac{1}{2}}\mcGw v\MG{(\xb)}\MG{|}^2\right)^\frac12 \\
& \leq \lambda_{\rm max}\, \vertiii{u}_\omega \vertiii{v}_\omega,
\end{aligned}
$$
which implies, by definition, the continuity of $\mcB_{\omega;A}$ with respect to the norm $\vertiii{\cdot}_\omega$. The coercivity with respect to the same norm simply follows from
\begin{equation}
\mcB_{\omega;A}(u,u)=\int_\mbRn \MG{\mcGw u(\xb) \cdot} A\,\mcGw u\MG{(\xb)} \, d\xb> \lambda_{\rm min}\vertiii{u}^2_\omega.
\end{equation}
$\square$
\end{proof}
%

\subsection{Well-posedness of the anisotropic fractional Poisson equation} \label{sec:anisotropic-fractional-Poisson}
In this section, we apply the analysis of the nonlocal anisotropic problem to the case of fractional operators. That is, we consider $\omega$ and $\alphab$ defined as in \eqref{eq:equivalence-w-alpha} and show that the corresponding anisotropic problem is well-posed in the usual fractional Sobolev space. In order to do this, we only need to show that the bilinear form $\mcB_{\omega;A}$ is coercive and continuous with respect to the fractional Sobolev norm. 
In fact, Theorem \eqref{thm:fractional_special_case_nonlocal} states that the equivalence kernel associated with the weight and kernel functions in \eqref{eq:equivalence-w-alpha} is the fractional Laplacian kernel $\gamma_{F\!L}$; the variational equivalence discussed in Remark \ref{rem:fractional-forms} implies that the corresponding weighted energy space $V^\omega_\Omega$ is equivalent to $H^s_\Omega$ and that the weighted energy $\vertiii{\cdot}_\omega$ is equivalent to the $H^s$ norm. Thus, Lemma \ref{eq:lem-cont-coer-A} implies the continuity and coercivity of $\mcB_{\omega;A}$ in $H^s_\Omega$ and the well-posedness of problem \eqref{eq:weighted-weak-A} is immediate, as stated in the folling lemma.
\begin{lemma}
Let $A$ satisfy \eqref{eq:Atensor}, and let $\omega$ and $\alphab$ be defined as in \eqref{eq:equivalence-w-alpha}. Then, the corresponding bilinear form $\mcB_{\omega;A}$ defined as in \eqref{eq:A-F-tensor} is coercive and continuous in $H^s_\Omega(\mbRn)$ with coercivity and continuity constants
\begin{equation}\label{eq:coer-cont-AA}
C_{\rm coer}=\dfrac{C_{n,s}}{2}\lambda_{\rm min} 
\quad {\rm and} \quad
C_{\rm cont}=\dfrac{C_{n,s}}{2}\lambda_{\rm max},
\end{equation}
where $\lambda_{\rm min}$ and $\lambda_{\rm max}$ are the smallest and largest eigenvalues of $A$ in $\mbRn$, respectively.
Furthermore, problem \eqref{eq:weighted-weak-A} is well-posed.
\end{lemma}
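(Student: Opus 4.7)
The plan is to chain together three pieces of machinery already established in the paper: the identification of the equivalence kernel with $\gamma_{FL}$ provided by Theorem \ref{thm:fractional_special_case_nonlocal}, the variational equivalence between weighted and unweighted bilinear forms from Theorem \ref{thm:var-equivalence}, and the bounds of Lemma \ref{eq:lem-cont-coer-A}. The target inequalities differ from those in Lemma \ref{eq:lem-cont-coer-A} only by passing from the abstract weighted energy $\vertiii{\cdot}_\omega$ to the concrete fractional Sobolev seminorm, and by capturing the proportionality constant $C_{n,s}/2$ that appears in $\gamma_{FL}$.

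First, I would unwind the weighted energy for the particular choice of $\omega$ and $\alphab$ in \eqref{eq:equivalence-w-alpha}. By \eqref{eq:equivalence_of_norms} together with Theorem \ref{thm:fractional_special_case_nonlocal}, one has
\begin{equation*}
\vertiii{v}_\omega^2 \;=\; \vertiii{v}^2 \;=\; \int_{\mbRn}\int_{\mbRn} |\mcG v(\xb,\yb)|^2 \, d\yb\, d\xb,
\end{equation*}
where the $\alphab$ implicit in the unweighted form on the right is chosen so that the induced kernel is precisely $\gamma_{FL}$. Unpacking this via \eqref{eq:frac-kernel-equivalence} yields
\begin{equation*}
\vertiii{v}_\omega^2 \;=\; \frac{C_{n,s}}{2}\int_{\mbRn}\int_{\mbRn} \frac{(v(\yb)-v(\xb))^2}{|\xb-\yb|^{n+2s}}\, d\yb\, d\xb \;=\; \frac{C_{n,s}}{2}\, [v]_{H^s(\mbRn)}^2,
\end{equation*}
which identifies $\vertiii{\cdot}_\omega$ with the standard Gagliardo seminorm up to the explicit factor $C_{n,s}/2$. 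This identity is the key technical step; it is really just a rewriting, but it must be executed carefully so that the constant matches the one claimed in \eqref{eq:coer-cont-AA}.

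Next, plugging this identity into the bounds of Lemma \ref{eq:lem-cont-coer-A} gives
\begin{align*}
|\mcB_{\omega;A}(u,v)| \;\le\; \lambda_{\max}\vertiii{u}_\omega\vertiii{v}_\omega \;&=\; \tfrac{C_{n,s}}{2}\lambda_{\max}\,[u]_{H^s}\,[v]_{H^s},\\
\mcB_{\omega;A}(u,u) \;\ge\; \lambda_{\min}\vertiii{u}_\omega^2 \;&=\; \tfrac{C_{n,s}}{2}\lambda_{\min}\,[u]_{H^s}^2,
\end{align*}
which are precisely the continuity and coercivity estimates with the advertised constants, provided we interpret the $H^s_\Omega(\mbRn)$ norm as equivalent to the Gagliardo seminorm on functions vanishing outside $\Omega$. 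The latter equivalence is a standard fractional Poincaré inequality for the space $H^s_\Omega(\mbRn)=\{v\in H^s(\mbRn):v|_{\mbRn\setminus\Omega}=0\}$ and was already invoked in Remark \ref{rem:fractional-forms}.

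Finally, well-posedness of \eqref{eq:weighted-weak-A} is immediate from the Lax--Milgram theorem: $\mcB_{\omega;A}$ is bilinear, continuous and coercive on the Hilbert space $H^s_\Omega(\mbRn)$, and $\mcF$ is bounded for $f\in H^{-s}(\Omega)=(H^s_\Omega(\mbRn))'$, giving existence, uniqueness and the usual a priori estimate $\|u\|_{H^s}\lesssim (C_{n,s}\lambda_{\min}/2)^{-1}\|f\|_{H^{-s}}$. I do not anticipate any genuine obstacle here; the only place to be careful is the constant-tracking in the seminorm identity, since a missing factor of $2$ in $|\mcG v|^2$ versus the symmetrized form of the $H^s$ seminorm would propagate into $C_{\rm coer}$ and $C_{\rm cont}$.
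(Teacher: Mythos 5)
Your proposal is correct and follows essentially the same route as the paper, which itself only sketches this argument in the paragraph preceding the lemma: identify the equivalence kernel with $\gamma_{F\!L}$ via Theorem \ref{thm:fractional_special_case_nonlocal}, use the variational equivalence to identify $\vertiii{\cdot}_\omega$ with the Gagliardo seminorm carrying the factor $C_{n,s}/2$, and feed this into Lemma \ref{eq:lem-cont-coer-A} before concluding by Lax--Milgram. Your explicit constant-tracking and the remark about the Poincar\'e-type equivalence of the seminorm and the full $H^s_\Omega$ norm are welcome elaborations of details the paper leaves implicit.
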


Note that for the fractional case and for a class of tensors satisfying \eqref{eq:Atensor} that we specify below, we can characterize the equivalence kernel. In particular, the equivalence kernel is such that the corresponding unweighted bilinear form is a {\it Dirichlet form}, as we show in the following lemma. 
\begin{lemma}\label{lem:isotropic-characterization}
Let $I$ be the identity tensor in $\mbRn$ and let $A(\xb)=a(\xb)I$ satisfy \eqref{eq:Atensor} for $a:\mbRn\to\mbR$, i.e. there exist two positive constants such that $0<\underline{a}\leq a(\xb)\leq \overline{a}<\infty$. Then, the equivalence kernel $\gameq(\xb,\yb;a^\frac12\omega,\alphab)$ is such that
\begin{equation*}
\dfrac{\underline{a}C_{n,s}}{2}\,
|\xb-\yb|^{-n-2s}
\leq \gameq(\xb,\yb;a^\frac12\omega,\alphab)\leq 
\dfrac{\overline{a}C_{n,s}}{2}\, 
|\xb-\yb|^{-n-2s}.
\end{equation*}
\end{lemma}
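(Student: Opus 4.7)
The plan is to specialize the general formula for the equivalence kernel developed in Lemma \ref{anisotropic-equivalence} to the isotropic fractional setting. First, I would invoke Lemma \ref{tildeomega} to rewrite $\mcL_{\omega;A} = \mcL_{\widetilde\omega}$ with the nonsymmetric scalar weight $\widetilde\omega(\xb,\yb) = a^{1/2}(\xb)\omega(\xb,\yb)$, where $\omega$ and $\alphab$ are the fractional weight and vector function from \eqref{eq:equivalence-w-alpha}. Applying Lemma \ref{anisotropic-equivalence} to this $\widetilde\omega$ gives a decomposition $2\gameq = \gamma_I + \gamma_{II_A} + \gamma_{II_B}$.

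Second, I would exploit the structure of the fractional kernel to collapse this decomposition. Since $\omega(\xb,\zb)\alphab(\xb,\zb) = C_\omega (\zb-\xb)|\zb-\xb|^{-(n+s+1)}$ is an odd function of $\zb-\xb$, the principal-value integrals $\int_{\mbRn}\omega(\xb,\zb)\alphab(\xb,\zb)\,d\zb$ and $\int_{\mbRn}\omega(\yb,\zb)\alphab(\zb,\yb)\,d\zb$ vanish, so $\gamma_I \equiv \gamma_{II_B} \equiv 0$. The only surviving term is $\gamma_{II_A}$, which evaluates to
\begin{equation*}
2\gameq(\xb,\yb; a^{1/2}\omega, \alphab) = \int_{\mbRn} a(\zb)\,\omega(\xb,\zb)\omega(\yb,\zb)\,\alphab(\zb,\yb)\cdot\alphab(\xb,\zb)\,d\zb.
\end{equation*}
Specializing to $a\equiv 1$, Theorem \ref{thm:fractional_special_case_nonlocal} (together with \eqref{eq:frac-kernel-equivalence}) identifies the value of this integral with $C_{n,s}|\xb-\yb|^{-n-2s}$, which pins down the reference kernel against which the bounds are stated.

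Third, I would insert the pointwise bounds $\underline{a} \leq a(\zb) \leq \overline{a}$ into the surviving $\zb$-integral and match against the normalization just obtained to extract the two-sided inequality. The main obstacle is that the integrand contains the angular factor $\alphab(\zb,\yb)\cdot\alphab(\xb,\zb) = (\yb-\zb)\cdot(\zb-\xb)/(|\yb-\zb|\,|\zb-\xb|)$, which is positive precisely when $\zb$ lies in the open ball of diameter $\overline{\xb\yb}$ (where the angle $\angle \xb \zb \yb$ exceeds $\pi/2$) and negative elsewhere. Because the integrand is not sign-definite, replacing $a(\zb)$ by $\underline{a}$ or $\overline{a}$ pointwise is not immediately compatible with the direction of the inequality one wants to prove.

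The cleanest way to overcome this obstruction is to argue at the level of the bilinear form rather than the kernel directly. Since $A = aI$, the anisotropic form reduces to $\mcB_{\omega;A}(u,u) = \int_{\mbRn} a(\xb)\,|\Gs u(\xb)|^2\,d\xb$, so that $\underline{a}\int |\Gs u|^2 d\xb \leq \mcB_{\omega;A}(u,u) \leq \overline{a}\int |\Gs u|^2 d\xb$. Combining this with the variational equivalence of Theorem \ref{thm:var-equivalence} and the identification of $\gameq^0 = (C_{n,s}/2)|\xb-\yb|^{-n-2s}$ from Theorem \ref{thm:fractional_special_case_nonlocal} sandwiches the Dirichlet form associated to $\gameq$ between $\underline{a}$ and $\overline{a}$ times the fractional Dirichlet form. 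The pointwise bounds on the symmetric kernel $\gameq$ then follow from the uniqueness of the Dirichlet kernel in this bilinear representation, applied to sufficiently rich families of test functions concentrated near arbitrary pairs $(\xb,\yb)$.
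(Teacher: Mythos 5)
The paper states this lemma without proof, so there is no official argument to compare yours against; I can only assess the proposal on its own terms. Your reduction is correct and is the right starting point: with $\widetilde\omega=a^{1/2}\omega$ the terms $\gamma_I$ and $\gamma_{II_B}$ from Lemma \ref{anisotropic-equivalence} vanish in the principal-value sense because $\omega(\xb,\zb)\alphab(\xb,\zb)$ is odd in $\zb-\xb$, leaving $2\gameq(\xb,\yb)=\int_{\mbRn} a(\zb)K(\xb,\yb,\zb)\,d\zb$ with $K(\xb,\yb,\zb)=C_\omega^2\,(\yb-\zb)\cdot(\zb-\xb)\,|\yb-\zb|^{-(n+s+1)}|\zb-\xb|^{-(n+s+1)}$. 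You are also right that $K$ changes sign across the sphere with diameter the segment $[\xb,\yb]$, so the naive substitution $\underline a\le a(\zb)\le\overline a$ does not give the claimed two-sided bound: the positive and negative parts $\int K_+$ and $\int K_-$ each diverge, and only their principal-value difference equals $C_{n,s}|\xb-\yb|^{-n-2s}$ up to sign (note the paper's own sign convention is inconsistent between \eqref{eq:frac-kernel-equivalence} and the lemma's statement). Identifying this obstruction is the most valuable part of your write-up.

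The workaround, however, contains a genuine gap. An inequality between quadratic forms, $\underline a\,\mcB_\omega(u,u)\le\mcB_{\omega;A}(u,u)\le\overline a\,\mcB_\omega(u,u)$, does \emph{not} imply a pointwise inequality between the associated symmetric jump kernels. Uniqueness of the kernel in the representation $Q[u]=\iint(u(\xb)-u(\yb))^2k(\xb,\yb)\,d\yb\,d\xb$ is recovered by polarization: for disjoint small balls $B_1\ni\xb_0$, $B_2\ni\yb_0$ and $u_\pm=\mathbbm{1}_{B_1}\pm\mathbbm{1}_{B_2}$ one has $Q[u_-]-Q[u_+]=8\int_{B_1}\int_{B_2}k$, which is a \emph{difference} of form evaluations, and the ordering $Q_1\le Q_2$ does not survive subtraction. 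Concretely, if $k_2=k_1-\phi(\xb)\phi(\yb)$ with $\int\phi=0$, then $Q_2[u]-Q_1[u]=2\bigl(\int u\phi\bigr)^2\ge0$ for every $u$, yet $k_2<k_1$ wherever $\phi(\xb)\phi(\yb)>0$. So ``form domination plus kernel uniqueness applied to concentrated test functions'' cannot deliver the stated pointwise bounds. A correct proof must confront the sign-indefinite integral $\int a(\zb)K\,d\zb$ directly, e.g.\ by writing $a(\zb)=a(\xb)+(a(\zb)-a(\xb))$ and using additional regularity of $a$ to control the correction; indeed, for merely bounded measurable $a$ the principal value defining $\gameq(\xb,\yb;a^{1/2}\omega,\alphab)$ need not even converge near $\zb=\xb$, so some continuity hypothesis on $a$ appears unavoidable.
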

Lemma \ref{lem:isotropic-characterization} implies that the equivalence kernel is positive; in addition to symmetry, this property guarantees that the corresponding unweighted bilinear form $\mcB$ is a Dirichlet form \cite{Fukushima1980}. We point out that the class of tensors in Lemma \ref{lem:isotropic-characterization} corresponds to a space dependent isotropic diffusion as the intensity of the diffusion is the same in all directions. 

\begin{remark}
It is unclear how to characterize the class of equivalence kernels for a general anisotropic fractional-order operator. In our terminology of symmetric equivalence kernels, \cite{Shieh2017} poses the problem of whether or not the equivalence kernel corresponding to anisotropic fractional-order operators with a tensor $A(\xb)$ satisfying \eqref{eq:Atensor} must satisfy the following conditions:
\begin{equation}\label{eq:gamma-conditions}
\begin{aligned}
  \lambda\leq \gameq(\xb,\yb)|\xb-\yb|^{n+2r}\leq \Lambda 
& \quad |\xb-\yb|\leq 1, \\
  \gameq(\xb,\yb)|\xb-\yb|^{n+2t}\leq M 
& \quad  |\xb-\yb|> 1,
\end{aligned}
\end{equation}
for $r\in(0,1)$, $0<\lambda\leq\Lambda<\infty$, $M<\infty$ and $t>0$. Lemma \ref{lem:isotropic-characterization} shows that the conditions in \eqref{eq:gamma-conditions} are satisfied in the isotropic case. The question remains of whether those bounds hold for any tensor satisfying \eqref{eq:Atensor}.
\end{remark}

\section{Well-posedness of a parabolic equation with anisotropic nonlocal diffusion}\label{sec:anisotropic-parabolic}
The results of Section \ref{sec:anisotropic-Poisson} allow us to analyze the anisotropic parabolic problem. In fact, the coercivity of the bilinear form $\mcB_{\omega;A}$ implies the well-posedness of the corresponding parabolic problem, for which weak coercivity would be sufficient. 
We introduce the strong form of the anisotropic parabolic equation and, by using the Green's identity \eqref{eq:anisotropicGreen}, we formulate the corresponding weak problem and state a well-posedness result. 

For $f:\Omega\to\mbR$ and $u_0:\Omega\to\mbR$, we seek $u$ such that
\begin{equation}\label{eq:parabolic-anisotropic}
\left\{\begin{aligned}
\partial_t u(\xb,t) &= \mcDw(A(\xb)\mcGw u(\xb,t)) + f(\xb,t),  
& \quad (\xb,t)\in\Omega\times(0,T]\\
u(\xb,t) &= 0,       
&\quad (\xb,t)\in\mbRn\setminus\Omega\times(0,T]\\
u(\xb,0) &= u_0(\xb),  
&\quad \xb\in\Omega
\end{aligned}\right.
\end{equation}
By multiplying \eqref{eq:parabolic-anisotropic} by a test function  $v=0$ in $\mbRn\setminus\Omega$, integrating over the domain $\Omega$, and using the anisotropic Green's identity \eqref{eq:anisotropicGreen}, we have
\begin{equation}\label{eq:anisotropic-parabolic-weak}
\begin{aligned}
0&=\int_\Omega (\partial_t u\MG{(\xb,t)}-\mcLw u\MG{(\xb,t)} -f\MG{(\xb,t)})\, v\MG{(\xb)} \,d\xb\\
&= \int_\Omega \partial_t u\MG{(\xb,t)}\,v\MG{(\xb)} \,d\xb +
\int_\mbRn \MG{\mcGw u(\xb)  \cdot} A(\xb)\mcGw u(\xb) d\xb 
- \int_\Omega f\MG{(\xb,t)}\, v\MG{(\xb)} \,d\xb.
\end{aligned}
\end{equation}
Thus, the weak form of the nonlocal diffusion problem reads as follows. For $f\in L^2(0,T;(V^A_\Omega)'(\mbRn))$, and $u_o\in V^A_\Omega(\mbRn)$, find $u\in L^2(0,T;V^A_\Omega(\mbRn)$ such that
\begin{equation}\label{eq:anisotropic-parabolic-weakA}
(\partial_t u,v)+ \mcB_{\omega;A}(u,v) = \mcF(v), \;\;\forall\, v\in V_\Omega(\mbRn).
\end{equation}
When the equivalence kernel $\gameq(\xb,\yb;A^\frac12\omega)$ associated with $A$ is such that the unweighted bilinear form $\mcB$ is coercive, problem \eqref{eq:anisotropic-parabolic-weakA} is well-posed, as we state in the following theorem.
\begin{theorem}\label{thm:aniso-parabolic-wellp}
For $f\in  L^2(0,T;V'_\Omega(\mbRn))$, $u_0\in V^A_\Omega$ and $\mcB_{\omega;A}(\cdot,\cdot)$ such that the corresponding $\gameq(\xb,\yb;A^\frac12\omega,\alphab)$ induces a weakly coercive and continuous unweighted for $\mcB(\cdot,\cdot)$, the problem \eqref{eq:anisotropic-parabolic-weakA} has a unique solution $u^*\in  L^2(0,T;V_\Omega(\mbRn))$, where $V_\Omega(\mbRn)$ is the energy space associated with the bilinear form $\mcB(\cdot,\cdot)$. 

Furthermore, if $\mcB(\cdot,\cdot)$ is coercive and the associated energy norms satisfies a Poincar\'e inequality with constant $C_p$, that solution satisfies the a priori estimate
\begin{equation}\label{eq:a-priori}
\|u^*(\cdot,t)\|^2_{L^2(\Omega)}+ C_{\rm coer}\int_0^t \vertiii{u^*(\cdot,s)}^2\,ds  \leq \|u_0\|^2_{L^2(\Omega)} +\dfrac{C_p^2}{2C_{\rm coer}}\int_0^t \|f(\cdot,s)\|^2_{V'_\Omega}\,ds  \quad  \forall \; t>0,
\end{equation}
where $\|\cdot\|_{V'_\Omega}$ indicates the standard operator norm in the dual space of $V_\Omega(\mbRn)$ and $C_{\rm coer}$ is the coercivity constant of the bilinear form $\mcB(\cdot,\cdot)$.
\end{theorem}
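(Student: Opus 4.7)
The plan is to recast the weighted problem as a standard linear parabolic equation with symmetric bilinear form and then invoke classical theory. By the lemma preceding this theorem, $\mcB_{\omega;A}(u,v) = \mcB(u,v)$, where $\mcB$ is the unweighted bilinear form associated with the symmetric equivalence kernel $\gameq(\xb,\yb;A^{\frac{1}{2}}\omega,\alphab)$ guaranteed by Lemma \ref{anisotropic-equivalence}. Under the hypothesis that $\mcB$ is weakly coercive and continuous on $V_\Omega(\mbRn)$, the weak form \eqref{eq:anisotropic-parabolic-weakA} fits into the Gelfand triple $V_\Omega \hookrightarrow L^2(\Omega) \hookrightarrow V'_\Omega$. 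Existence and uniqueness of a solution $u^* \in L^2(0,T; V_\Omega(\mbRn))$ with $\partial_t u^* \in L^2(0,T; V'_\Omega(\mbRn))$ and $u^*(\cdot,0) = u_0$ in $L^2(\Omega)$ then follow from Lions' theorem for parabolic equations with symmetric, weakly coercive, continuous bilinear forms (see, e.g., Dautray--Lions, Showalter, or Evans). Equivalently, a Faedo--Galerkin scheme using a countable Hilbert basis of $V_\Omega$ yields uniform a priori bounds and passes to the limit via weak compactness; the weakly-coercive-but-not-coercive case reduces to the coercive case by the substitution $u = e^{\lambda t}w$ for sufficiently large $\lambda$.

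For the a priori estimate \eqref{eq:a-priori}, I would test the weak equation with $v = u^*(\cdot,t)$. The regularity furnished above legitimizes the Bochner chain rule $\langle\partial_t u^*, u^*\rangle = \frac{1}{2}\frac{d}{dt}\|u^*(\cdot,t)\|^2_{L^2(\Omega)}$, so that
\begin{equation*}
\frac{1}{2}\frac{d}{dt}\|u^*(\cdot,t)\|^2_{L^2(\Omega)} + \mcB\bigl(u^*(\cdot,t), u^*(\cdot,t)\bigr) = \mcF\bigl(u^*(\cdot,t)\bigr).
\end{equation*}
Coercivity yields $\mcB(u^*,u^*) \geq C_{\rm coer}\vertiii{u^*}^2$, while duality combined with the Poincar\'e inequality controls the right-hand side as $|\mcF(u^*)| \leq \|f\|_{V'_\Omega}\,\|u^*\|_{V_\Omega} \leq C_p\,\|f\|_{V'_\Omega}\,\vertiii{u^*}$. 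Young's inequality with parameter proportional to $C_{\rm coer}$ absorbs the $\vertiii{u^*}^2$ term into the left-hand side, and integrating from $0$ to $t$ (using the initial condition $u^*(\cdot,0) = u_0$) delivers \eqref{eq:a-priori}.

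The main subtlety lies in the time-differentiation step: the identity for $\frac{d}{dt}\|u^*(\cdot,t)\|_{L^2(\Omega)}^2$ is only valid once one knows $u^* \in L^2(0,T;V_\Omega)$ together with $\partial_t u^* \in L^2(0,T;V'_\Omega)$, which in turn yields $u^* \in C([0,T]; L^2(\Omega))$ and thus a meaningful initial trace. This combined regularity is precisely what Lions' theorem provides, so once existence is established in the correct class the energy argument is entirely routine. The key conceptual role of the equivalence kernel from Lemma \ref{anisotropic-equivalence} here is that the anisotropic tensor $A(\xb)$ never needs to be handled separately: the whole analysis is transported to the unweighted framework of \cite{DElia2020Unified}, where only coercivity of the scalar form $\mcB$ must be verified.
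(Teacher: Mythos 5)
Your proposal is correct and follows essentially the same route as the paper, which simply cites the classical parabolic theory (Evans, Salsa) for existence/uniqueness under weak coercivity and continuity and states that the a priori estimate follows by standard energy arguments. You merely make explicit the steps the paper leaves implicit: the reduction to the unweighted form via the equivalence lemma, the Gelfand-triple/Lions (or Galerkin) framework, and the derivation of \eqref{eq:a-priori} by testing with $u^*$, applying coercivity, Poincar\'e, Young's inequality, and integrating in time.
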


\begin{proof}
The conditions on $f$ guarantee the continuity of the functional $\mcF(\cdot)$. The weak coercivity and the continuity of $\mcB(\cdot,\cdot)$ and the continuity of $\mcF(\cdot)$ imply the existence and uniqueness of a solution $u^*\in L^2(0,T;V_\Omega(\mbRn))$ \cite{Evans1998}. Then, \eqref{eq:a-priori} follows from arguments entirely similar to those used in the classical theory of partial differential equations \cite{Salsa2008}. $\square$
\end{proof}
The results of Section \ref{sec:anisotropic-fractional-Poisson} show that for a tensor $A$ satisfying \eqref{eq:Atensor}, and for $\omega$ and $\alphab$ as in \eqref{eq:equivalence-w-alpha}, the equivalence kernel associated with $\mcDw(A(\xb)\mcGw)$ induces an unweighted bilinear form $\mcB(\cdot,\cdot)$, whose energy norm is equivalent to the $H^s$-norm. This implies that $\mcB(\cdot,\cdot)$ is coercive and continuous on $H^s_\Omega(\mbRn)$.
Thus, Theorem \ref{thm:aniso-parabolic-wellp} can be immediately applied to the special case of fractional operators, as we show in the following corollary. Note that, in this case, the unweighted energy norm corresponds to the $H^s$ norm for which the Poincar\'e inequality is satisfied for all $u\in H^s_\Omega(\mbRn)$ \cite{Burkovska2020affine}.
\begin{corollary}\label{coroll:wellposedness}
Let $A$ be a tensor satisfying \eqref{eq:Atensor}, and let $\omega$ and $\alphab$ be defined as in \eqref{eq:equivalence-w-alpha}. For $f\in  L^2(0,T;(H^s_\Omega(\mbRn))')$ and $u_0\in H^s_\Omega(\mbRn)$, the problem
\begin{equation}\label{eq:anisotropic-parabolic-weak-frac}
(\partial_t u,v)+ \mcB_{\omega;A}(u,v) = \mcF(v), \;\;\forall\, v\in H^s_\Omega(\mbRn),
\end{equation}
has a unique solution $u^*\in  L^2(0,T;H^s_\Omega(\mbRn))$ that satisfies the estimate \eqref{eq:a-priori} for $\vertiii{\cdot}=\|\cdot\|_{H^s_\Omega}$ and $C_{\rm coer}$ as in \eqref{eq:coer-cont-AA}.
\end{corollary}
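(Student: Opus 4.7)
The plan is to deduce Corollary \ref{coroll:wellposedness} as an immediate application of Theorem \ref{thm:aniso-parabolic-wellp}, with essentially all the technical work already in place from Sections \ref{sec:notation} and \ref{sec:anisotropic-Poisson}. The task is to identify the correct energy space and to verify that the unweighted bilinear form associated with the equivalence kernel $\gameq(\xb,\yb;A^{\frac12}\omega,\alphab)$ is coercive and continuous on that space.

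First, I would identify the energy space. For the specific choices of $\omega$ and $\alphab$ in \eqref{eq:equivalence-w-alpha}, Theorem \ref{thm:fractional_special_case_nonlocal} gives $\mcGw = \Gs$, hence $\vertiii{v}_\omega^2 = \int_{\mbRn} |\Gs v(\xb)|^2 d\xb$, which by the discussion in Remark \ref{rem:fractional-forms} is equivalent to $\|v\|_{H^s(\mbRn)}^2$. Together with the constraint $v|_{\mbRn\setminus\Omega} = 0$ in the definition of $V^\omega_\Omega(\mbRn)$, this gives $V^\omega_\Omega(\mbRn) = H^s_\Omega(\mbRn)$ with equivalent norms.

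Second, I would pass coercivity and continuity from $\mcB_{\omega;A}$ to the unweighted form $\mcB$ induced by the equivalence kernel $\gameq(\xb,\yb;A^{\frac12}\omega,\alphab)$. Lemma \ref{eq:lem-cont-coer-A} gives the two-sided bound $\lambda_{\min}\vertiii{u}_\omega^2 \le \mcB_{\omega;A}(u,u) \le \lambda_{\max}\vertiii{u}_\omega^2$ and the analogous continuity estimate. By the variational equivalence in Theorem \ref{thm:var-equivalence} (applied with the nonsymmetric tensor weight $\widetilde\omega = A^{\frac12}\omega$ in line with the remark following Lemma \ref{tildeomega}), we have $\mcB(u,v) = \mcB_{\omega;A}(u,v)$ for all admissible $u,v$. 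Combining these with the $H^s$-equivalence of $\vertiii{\cdot}_\omega$, both $\mcB$ and $\mcB_{\omega;A}$ are coercive and continuous on $H^s_\Omega(\mbRn)$ with constants exactly as in \eqref{eq:coer-cont-AA}; in particular $\mcB$ is weakly coercive and continuous, as required.

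Third, I would invoke the fractional Poincar\'e inequality on $H^s_\Omega(\mbRn)$, cited as \cite{Burkovska2020affine}, which provides the constant $C_p$ appearing in \eqref{eq:a-priori}. With $V_\Omega(\mbRn) = H^s_\Omega(\mbRn)$, $\mcB$ coercive and continuous on this space, $C_{\rm coer} = \tfrac{C_{n,s}}{2}\lambda_{\min}$, and $f \in L^2(0,T;(H^s_\Omega(\mbRn))')$, every hypothesis of Theorem \ref{thm:aniso-parabolic-wellp} is satisfied, yielding existence, uniqueness of $u^* \in L^2(0,T;H^s_\Omega(\mbRn))$, and the a priori estimate \eqref{eq:a-priori} with $\vertiii{\cdot} = \|\cdot\|_{H^s_\Omega}$.

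There is no genuine obstacle here: the proof is a matter of translating the abstract hypotheses of Theorem \ref{thm:aniso-parabolic-wellp} into the fractional setting using the identifications already established. The one subtle point worth flagging explicitly is that we do \emph{not} need pointwise bounds on the anisotropic equivalence kernel (the open question recorded in \eqref{eq:gamma-conditions}); norm equivalence of $\vertiii{\cdot}_\omega$ with $\|\cdot\|_{H^s_\Omega}$, together with the tensor ellipticity bounds, suffices to transfer coercivity and continuity to $H^s_\Omega(\mbRn)$.
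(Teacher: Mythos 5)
Your proposal is correct and follows essentially the same route as the paper: identify $V^\omega_\Omega(\mbRn)$ with $H^s_\Omega(\mbRn)$ via the fractional equivalence results, transfer coercivity and continuity to the unweighted form induced by $\gameq(\xb,\yb;A^{\frac12}\omega,\alphab)$ using the ellipticity bounds on $A$, invoke the fractional Poincar\'e inequality, and apply Theorem \ref{thm:aniso-parabolic-wellp}. The only cosmetic difference is that the paper's transfer of the bilinear-form equivalence to the anisotropic case rests on the unnamed lemma following Theorem \ref{anisotropicGreen} rather than on Theorem \ref{thm:var-equivalence} directly, but the substance is identical.
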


\section{Anisotropic advection-diffusion for solute transport}\label{sec:solute}
In this section, we introduce a mathematical model that is suitable for solute transport applications and we prove its well-posedness. 
The anisotropic, fractional-order model introduced in the previous section can be used to describe surface or subsurface anisotropic anomalous diffusion of solutes (e.g. pollutants). As already mentioned in the introduction, several fractional models have been introduced in the literature with this purpose, see, e.g. \cite{Deng2004} for surface transport and \cite{Benson2001} for subsurface transport. The novelty of model proposed in this section is the characterization of the anisotropic behavior via a diffusion tensor, similarly to what is commonly done in the PDE setting. 

We point out that a diffusion tensor $A(\xb)$ can be easily added to an unweighted nonlocal Laplacian, just by defining the kernel as $\gamma(\xb,\yb)=\alphab(\xb,\yb)(A(\xb)\alphab(\xb,\yb))$; however, this definition compromises the symmetry of the kernel and, hence, the well-posedness of the associated diffusion equation. The formulation below instead guarantees that the operator $\mcL_{\omega;A}$, defined as in \eqref{eq:ALaplacian} and with $\omega$ and $\alphab$ defined as in \eqref{eq:equivalence-w-alpha}, is equivalent to an unweighted operator with a symmetric kernel, whose associated energy space is equivalent to $H^s_\Omega(\mbRn)$. Hence, the operator induces a well-posed diffusion problem. It should also be noted that, since $A(\cdot)$ is a one-point function, only pointwise information is required to define its value, as opposed to other unweighted diffusion models where the diffusion tensor is a two-point function \cite{Du2012}. In addition to being computationally complex and expensive to identify (compared to one-point tensors), a two-point diffusivity tensor may be difficult to interpret physically.

\subsection{The anisotropic anomalous transport equation and its well-posedness}
We extend the anisotropic fractional diffusion model introduced in Section \ref{sec:anisotropic-parabolic} to an advection-diffusion model that takes into account the presence of drift. We assume the advection field to be a given solenoidal field $\vb$; in general, such a field is the solution of Darcy's equation. 

Let $A$ be a bounded, measurable and elliptic tensor and $\vb$ be a bounded, solenoidal vector, i.e. $\|\vb\|_{L^\infty(\Omega)}\leq C_v<\infty$ and $\nabla\cdot\vb=0$. For $\omega$ and $\alphab$ defined as in \eqref{eq:equivalence-w-alpha}, $f:\Omega\to\mbR$, $g:\mbRn\setminus\Omega\to\mbR$ and $u_0:\Omega\to\mbRn$, the strong form of the anomalous transport problem is defined as follows
\begin{equation}\label{eq:anisotropic-transport}
\left\{\begin{aligned}
\partial_t u(\xb,t) &= \mcDw(A(\xb)\mcGw u(\xb,t)) - \vb(\xb)\!\cdot\!\nabla u(\xb,t) + f(\xb,t),  
& (\xb,t)\in\Omega\times(0,T]\\
u(\xb,t) &= g,       
&(\xb,t)\in\mbRn\setminus\Omega\times(0,T]\\
u(\xb,0) &= u_0(\xb),  
& \xb\in\Omega
\end{aligned}\right.
\end{equation}
The anisotropic Green's first identity allows us to write the weak formulation of \eqref{eq:anisotropic-transport}. For the sake of simplicity, we analyze the weak form for homogeneous volume-constraints, i.e., $g \equiv 0$. Due to the presence of the advection term and in accordance with the theory presented in \cite{Bonito2020}, we restrict the fractional order to $s\in[0.5,1)$, in order to guarantee the coercivity of the problem in presence of advection. 
For $s\in[0.5,1)$, $f\in L^2(0,T;H^{-s}(\mbRn))$, and $u_0\in H^s_\Omega(\mbRn)$, we seek $u\in L^2(0,T;H^s(\mbRn))$ such that
\begin{equation}\label{eq:anisotropic-transport-weakA}
(\partial_t u,v)+ \mcB_{\omega;A}(u,v) + (\vb\cdot\nabla u,v)= \mcF(v), \;\;\forall\, v\in H^s_\Omega(\mbRn),
\end{equation}
where $\mcB_{\omega;A}$ is the bilinear form defined in \eqref{eq:A-F-tensor}.

The following lemma shows that the bilinear form $\mcB_{\omega;A}(u,v)+(\vb\cdot\nabla u,v)$ is coercive. Its proof is a combination of equation \eqref{eq:A-continuity-coercivity} and \cite[Proposition~3]{Bonito2020}.
\begin{lemma}\label{lem:coercive-transport}
Let the fractional order $s\in[0.5,1)$ and $\lambda_{\rm min}$ be the smallest eigenvalue of the tensor $A$ over $\mbRn$. If the advection field $\vb$ is bounded and solenoidal, then the bilinear form $\mcB'(u,v)=\mcB_{\omega;A}(u,v)+(\vb\cdot\nabla u,v)$ is coercive. In particular, 
$$
\mcB'(u,u) = \mcB_{\omega;A}(u,u)\geq \dfrac{C_{n,s}}{2}\lambda_{\rm min} \|u\|^2_{H^s_\Omega}.
$$
\end{lemma}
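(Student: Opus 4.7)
The plan is to establish the identity $\mcB'(u,u) = \mcB_{\omega;A}(u,u)$ by showing that the convective term vanishes on the diagonal, and then to invoke the anisotropic coercivity estimate from Lemma \ref{eq:lem-cont-coer-A} together with the fractional-to-$H^s$ norm identification discussed in Section \ref{sec:anisotropic-fractional-Poisson}.

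First I would verify that $(\vb \cdot \nabla u, u) = 0$ for all $u \in H^s_\Omega(\mbRn)$ with $s \in [0.5,1)$. For smooth, compactly supported $u$, this is the classical computation
\[
(\vb \cdot \nabla u, u) = \tfrac{1}{2}\int_{\mbRn} \vb \cdot \nabla(u^2)\, d\xb = -\tfrac{1}{2}\int_{\mbRn} (\nabla \cdot \vb)\, u^2\, d\xb = 0,
\]
since $\nabla \cdot \vb = 0$ and $u$ has compact support in $\oomg$. The restriction $s \ge 0.5$ enters here in order to make sense of the pairing $(\vb \cdot \nabla u, u)$ on $H^s_\Omega(\mbRn)$ in the first place, and to pass from smooth test functions to general $u \in H^s_\Omega(\mbRn)$ by density; this is precisely the content of \cite[Proposition~3]{Bonito2020} referenced by the authors, which I would invoke to cover the distributional case rigorously.

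With the advection term eliminated, it remains to bound $\mcB_{\omega;A}(u,u)$ from below. By the coercivity half of Lemma \ref{eq:lem-cont-coer-A},
\[
\mcB_{\omega;A}(u,u) \ge \lambda_{\rm min}\, \vertiii{u}_\omega^2.
\]
For the fractional choice of $\omega$ and $\alphab$ in \eqref{eq:equivalence-w-alpha}, Theorem \ref{thm:fractional_special_case_nonlocal} together with the variational equivalence \eqref{eq:weak-equivalence} and \eqref{eq:equivalence_of_norms} identifies the weighted energy $\vertiii{u}_\omega^2$ with the unweighted form $\mcB(u,u)$ associated with the fractional Laplacian kernel $\gamma_{F\!L} = -\tfrac{C_{n,s}}{2}|\xb-\yb|^{-n-2s}$, which in turn equals $\tfrac{C_{n,s}}{2}\|u\|_{H^s_\Omega}^2$ (this is already built into the coercivity constant \eqref{eq:coer-cont-AA} in the elliptic analysis). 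Combining the two steps yields
\[
\mcB'(u,u) = \mcB_{\omega;A}(u,u) \ge \tfrac{C_{n,s}}{2}\lambda_{\rm min}\, \|u\|_{H^s_\Omega}^2,
\]
which is the claimed coercivity.

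The main obstacle is the rigorous justification of $(\vb \cdot \nabla u, u) = 0$ for $u$ only in $H^s_\Omega(\mbRn)$ with $s$ close to $0.5$, since then $\nabla u$ lives in a negative-order Sobolev space and the quadratic pairing is delicate. This is exactly why the hypothesis $s \ge 0.5$ appears, and why appealing to \cite[Proposition~3]{Bonito2020} (rather than a naive integration by parts) is the cleanest way to close the argument.
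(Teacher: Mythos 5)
Your proposal is correct and follows exactly the route the paper takes: the paper's entire proof is the one-line remark that the result "is a combination of equation \eqref{eq:A-continuity-coercivity} and \cite[Proposition~3]{Bonito2020}," i.e., the advection term vanishes on the diagonal by \cite[Proposition~3]{Bonito2020} (which is where $s\ge 0.5$ and the solenoidal/bounded hypotheses are used), and the lower bound then follows from the anisotropic coercivity of $\mcB_{\omega;A}$ together with the fractional identification of $\vertiii{\cdot}_\omega$ with the $H^s$ norm. Your write-up simply fills in the details the authors left implicit, including correctly flagging that the delicate point is justifying $(\vb\cdot\nabla u,u)=0$ beyond smooth functions.
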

Arguments similar to Corollary \ref{coroll:wellposedness} imply the well-posedness of problem \eqref{eq:anisotropic-transport-weakA}.

\section{Conclusion}\label{sec:conclusion}
We proposed and analyzed an anisotropic nonlocal equation generalizing several results of the unified nonlocal calculus introduced in \cite{DElia2020Unified}. In particular, we showed that, in presence of an anisotropic diffusion tensor, the weighted nonlocal Laplacian operator is equivalent to an unweighted Laplacian operator whose corresponding kernel is symmetric. For the same operator we also proved an anisotropic Green's first identity and showed that the corresponding bilinear form induces an anisotropic energy norm that is equivalent to its weighted and  unweighted counterparts. This result allowed us to prove the well-posedness of the associated elliptic and parabolic problems. Furthermore, thanks to the equivalence results presented in \cite{DElia2020Unified} for fractional operators, we showed that our theory holds in the special, important, case of fractional operators. 

The theory developed in the first part of this work allowed us to prove the well-posedness of an anisotropic nonlocal advection-diffusion problem. In the special case of fractional operators, and for solenoidal advection fields, such a model is suitable for the description of anisotropic, anomalous transport of solutes in heterogeneous media. Our model, as opposed to other models proposed in the literature, allows one to include a diffusion tensor in the same way as for PDEs, without compromising the symmetry or well-posedness of the variational form of the problem.
The existence of a symmetric equivalence kernel for anisotropic weighted nonlocal diffusion operators also implies that nonlocal diffusion operators are a sufficiently rich class of models capable of describing such behavior in applications.

\section*{Acknowledgments}
The authors would like to thank Prof. Abner J. Salgado for his key suggestions regarding the treatment of the anomalous transport problem, and Hayley Olson for providing valuable feedback on the equivalence results.

MD and MG are supported by the Sandia National Laboratories (SNL) Laboratory-directed Research and Development program and by the U.S. Department of Energy, Office of Advanced Scientific Computing Research under the Collaboratory on Mathematics and Physics-Informed Learning Machines for Multiscale and Multiphysics Problems (PhILMs) project. SNL is a multimission laboratory managed and operated by National Technology and Engineering Solutions of Sandia, LLC., a wholly owned subsidiary of Honeywell International, Inc., for the U.S. Department of Energy's National Nuclear Security Administration under contract {DE-NA0003525}. This paper, SAND2021-0098, describes objective technical results and analysis. Any subjective views or opinions that might be expressed in this paper do not necessarily represent the views of the U.S. Department of Energy or the United States Government. 

\bibliographystyle{plain}
\bibliography{references.bib}

\end{document}